\documentclass[a4paper,12pt]{amsart}

\RequirePackage[OT1]{fontenc}
\RequirePackage{amsthm,amsmath}
\RequirePackage[colorlinks]{hyperref}

\numberwithin{equation}{section}

\usepackage[T1]{fontenc}
\usepackage[latin2]{inputenc}
\usepackage{indentfirst}
\usepackage[a4paper,left=2.5cm,right=2.5cm,top=2.5cm,bottom=2.5cm,bindingoffset=0cm]{geometry}
\usepackage{amsmath}
\usepackage{amssymb}
\usepackage{amsthm}
\usepackage{dsfont}
\usepackage{comment}
\usepackage{appendix}
\usepackage{graphicx}
\usepackage{subfigure}
\usepackage{caption}
\usepackage{enumerate}
\usepackage{pdfsync}
\usepackage{mathabx}

\linespread{1.3}

\newtheorem{thm}{Theorem}[section]    	
\newtheorem{prop}[thm]{Proposition}      	
\newtheorem{lemma}[thm]{Lemma}				
\newtheorem{cor}[thm]{Corollary}		
\newtheorem{defi}[thm]{Definition}

\newtheorem{fact}[thm]{Fact}

\newcommand{\matr}[1]{\underline{\underline{#1}}}

\newcommand*{\be}{\begin{equation}}
\newcommand*{\ee}{\end{equation}}
\newcommand*{\ba}{\begin{aligned}}
\newcommand*{\ea}{\end{aligned}}
\newcommand*{\barr}{\begin{array}{c}}
\newcommand*{\earr}{\end{array}}
\newcommand*{\Ev}{{\mathbb{ E}}}
\newcommand*{\Pv}{{\mathbb{ P}}}

\newcommand*{\ind}{\mathds{1}}



\begin{document}

\title{Projections of Mandelbrot percolation in higher dimensions}

\author{K\'aroly Simon}
\address{Department of Stochastics, Institute of Mathematics, Technical University of Budapest, 1521
Budapest, P.O.Box 91, Hungary} \email{simonk@math.bme.hu} \thanks{K. Simon was supported by OTKA $\#$ K 104745}

\author{Lajos Vágó}
\address{Department of Stochastics, Institute of Mathematics, Technical University of Budapest, 1521
Budapest, P.O.Box 91, Hungary} \email{vagolala@math.bme.hu} \thanks{L. Vagó was supported by TAMOP $\#$ 4.2.2.B-10/1-2010-0009}

\date{\today}

\begin{abstract}

We consider fractal percolation (or Mandelbrot percolation) which is one of the most well studied example of random Cantor sets. Rams and the first author \cite{RS} studied the projections (orthogonal, radial and co-radial) of fractal percolation sets on the plane.
We extend their results to higher dimension.
\end{abstract}

\maketitle

\thispagestyle{empty}

\vspace{-0.7cm}

\section{Introduction}

Fractal percolation, or Mandelbrot percolation \cite{M} (in general sense) on the plane is defined in the following way: Fix an integer $M\geq 2$ and probabilities $0<p_{i,j}<1$, $i,j=1,\dots,M$. Then partition the unit square $K=[0,1]^2$ into $M^2$ congruent squares of side length $1/M$. Let us denote them  by $K_{i,j}$, $i,j=1,\dots,M$. Then retain all small squares $K_{i,j}$ with probability $p_{i,j}$ independently from each other, or discard them otherwise. Repeat this procedure independently in the retained squares ad infinitum to finally get a random set $E$ called fractal percolation. The $d$-dimensional fractal percolation is defined analogously to the two-dimensional one.

The pioneering paper of Marstrand \cite{Mar} asserts that on the plane,
for any Borel set $A$ having Hausdorff dimension greater than one, the orthogonal projection of $A$ to a Lebesgue typical line has positive one-dimensional Lebesgue measure. In \cite{RS} the authors proved that in the case of fractal percolation sets, we can replace "almost all direction" with all direction and "positive one dimensional Lebesgue measure" of the projection of $A$ can be replaced with the existence of interval in the projection.  Mattila \cite{Mattila1} extended Marstrand theorem to higher dimension. Analogously we extend here the result of  \cite{RS} to dimensions higher than two.
 The major difficulty
 caused by the higher dimensional settings is handled
 in Lemma \ref{td1}.

 We remark that Falconer and Grimmett \cite{FG} (see Theroem \ref{td88} below) proved that in $\mathbb{R}^d$, for $d \geq 2$ the orthogonal projections of the fractal percolation set to any coordinate planes is as big as possible (roughly speaking). This was extended in \cite{RS} to all projections simultaneously but only in the plane. Now we extend the result of \cite{FG} also in higher dimensions for all projections simultaneously.

It is well known that if all the probabilities $p_{i,j}$ are greater than $1/M$, then conditioned on non-emptiness, the fractal percolation set $E$ has Hausdorff dimension greater than $1$ a.s. \cite{F,MW}, and if all of the probabilities $p_{i,j}$ are smaller than a critical probability $p_c$, then $E$ is totally disconnected \cite{CD}. However, in \cite{RS} the authors gave a rather complicated technical condition which simplifies to $p>1/M$ when all $p_{i,j}$ equal $p$, under which the orthogonal projections of $E$ (which is a random dust) in all directions contain some interval, conditioned on $E\neq \emptyset$.

For $d\geq 3$, $d>k\geq 1$ the projections of $d$-dimensional fractal percolation to $k$-dimensional linear subspaces are more complicated. The aim of this note is to verify that the method of \cite{RS} can be extended to higher dimension under analogous assumptions. For example, one of the results of  \cite{RS} asserts that whenever the
Mandelbrot percolation set $E\subset \mathbb{R}^2$ has Hausdorff dimension  greater than one
and the two-dimensional sun shines at  $E$ (radial projection), then
 there is an interval in the shadow.
 However, we live in $\mathbb{R}^3$, so it is natural to verify the corresponding theorems  in higher dimension. In particular, it follows from our result that if the tree-dimensional Mandelbrot percolation $E\subset\mathbb{R}^3$ has Hausdorff dimension greater than two then for almost all realizations, at every moment (where ever the sun is) we
can find a disk in the shadow of $E$.

Although our proof follows the line of the proofs in \cite{RS} but here we needed to handle additional technical difficulties which do not appear in the plane.

\subsection{Notation}

\begin{figure}
  \begin{center}
  \captionsetup{singlelinecheck=off}
    \includegraphics[width=0.5\textwidth]{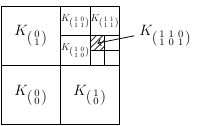}
    \caption{In dimension $d=2$, if $M=2$, then the indices are $2 \times n$ matrices with entries from $\{0,1\}$, for example the shaded level $3$ square is $K_{\left(\protect\begin{smallmatrix}1&1&0\\ 1&0&1\protect\end{smallmatrix}\right)}$.}\label{tdfig5}
    \end{center}
\end{figure}

We use the higher dimensional analogues of the notations of \cite{RS}. To define the fractal percolation in $[0,1]^d$,
first we label the $M^{-n}$-mesh cubes by $d\times n$ matrices chosen from
\[
\mathcal{A}_n=\{0,\dots,M-1\}^{d\times n},
\]
where the $1 \leq k \leq n$-th column corresponds to the level $k$ contribution. We explain this on a simple example with the help of Figure \ref{tdfig5}. Namely, in this example we assume that $d=2,M=2,n=3$ and $A=\left(\begin{smallmatrix}1&1&0\\ 1&0&1\end{smallmatrix}\right)$.
Then the first column $\left(\begin{smallmatrix}1\\1\end{smallmatrix}\right)$ of $A$ corresponds to the right top square of $K$ with side length $1/2$. The second column $\left(\begin{smallmatrix}1\\0\end{smallmatrix}\right)$ then corresponds to the right bottom smaller square of the previous square and the third column $\left(\begin{smallmatrix}0\\1\end{smallmatrix}\right)$ refers to the left top level $3$ square of its ancestor, which is the shaded square on Figure \ref{tdfig5}. Then the left bottom corner of this square is just
\[
\frac{1}{2} \cdot \left(
                    \begin{array}{c}
                      1 \\
                      1 \\
                    \end{array}
                  \right)
+
\frac{1}{2^2} \cdot \left(
                      \begin{array}{c}
                        1 \\
                        0 \\
                      \end{array}
                    \right)
+
\frac{1}{2^3} \cdot  \left(
                       \begin{array}{c}
                         0 \\
                         1 \\
                       \end{array}
                     \right).
\]
In general, for an $\mathbf{A}\in\mathcal{A}_n$ let $K_A$
be the corresponding level-$n$ cube. Then the homothety which maps the unit cube $K$ onto $K_A$ is
\[
  \varphi_\mathbf{A}(\mathbf{x})=\frac{1}{M^n} \mathbf{x}+ \mathbf{A}
\left(
\begin{array}{c}
M^{-1} \\
\vdots \\
M^{-n}
\end{array}
\right).
\]

We denote the $d$-dimensional Mandelbrot percolation in the unit cube $K$ with retain probabilities $\{p_\mathbf{A}\}_{\mathbf{A} \in \mathcal{A}_1}$ by $E=E(\omega)$. That is for $n\geq 0$ integers let $\mathcal{E}_n\subseteq \mathcal{A}_n$ be the random set defined inductively in the following way. Put $\mathcal{E}_0=\emptyset$. If for $\mathbf{A}\in \mathcal{A}_n$ we have $\mathbf{A}\notin \mathcal{E}_n$ then for any $\mathbf{C}\in \mathcal{A}_1$ and for $\mathbf{B}=
\big(
\begin{array}{c|c}
\mathbf{A} & \mathbf{C}
\end{array}
\big)
\in \mathcal{A}_{n+1}$ we have $\mathbf{B}\notin \mathcal{E}_{n+1}$ as well. On the other hand, if $\mathbf{A}\in \mathcal{E}_n$ then $\mathbf{B}=
\big(
\begin{array}{c|c}
\mathbf{A} & \mathbf{C}
\end{array}
\big)
\in \mathcal{A}_{n+1}$ with probability $p_\mathbf{C}$. The $n$-th approximation $E_n$ of $E$ is the subset of $K$ corresponding to $\mathcal{E}_n$:
\[
  E_n=\bigcup_{\mathbf{A} \in \mathcal{E}_n}{K_\mathbf{A}}.
\]
Then $E$ is defined by
\[
  E=\bigcap_{n=1}^\infty E_n.
\]

Now we turn our attention to the projections. Fix $1\leq k\leq d-1$. Let $\mathbf{a}^{(1)},\dots,\mathbf{a}^{(k)}$ be an orthonormal set of vectors in $\mathbb{R}^d$, put $\alpha=\left\{\mathbf{a}^{(1)},\dots,\mathbf{a}^{(k)}\right\}$  and let $P_{\alpha}$ be the linear subspace spanned by the vectors in $\alpha$:
\be \label{td87}
  S_\alpha=span\{\mathbf{a}^{(1)},\dots,\mathbf{a}^{(k)}\}.
\ee
Let $\gamma_\alpha=\left\{\bf{c}^{(1)},\dots,\bf{c}^{(d-k)}\right\}$ be
an arbitrary orthonormal basis of $P^\bot_{\alpha}$.

We consider the orthogonal projections $proj_{\alpha}$ of $E$ to each $k$-dimensional planes $S_{\alpha}$. Our goal is to determine the set of parameters $\{p_\mathbf{A}\}_{\mathbf{A} \in \mathcal{A}_1}$ for which almost surely $int\{ proj_{\alpha}E\} \neq \emptyset$ for all $\alpha$, conditioned on $E\neq \emptyset$.

It will be useful to handle projections parallel to some sides of the unite cube separately from other directions. Write $\mathbf{e}^{(i)}$ for the vector with all $0$ entries except for the $i$-th, which is $1$. We call $S_\alpha$ a \emph{coordinate plane} if there exist distinct $j_1,\dots,j_k \in[d]$ such that $S_\alpha=span\{\mathbf{e}_{j_1},\dots,\mathbf{e}_{j_k}\}$. As we mentioned above, the case of orthogonal projections to cordinate planes is fully covered by the paper of Falconer and Grimmett \cite[Theorem 1., p.3.]{FG}:

\begin{thm}[Falconer and Grimmett\cite{FG}]\label{td88}
Suppose that for all distinct $j_1,\dots,j_k\in[d]$ and for all $i^{(j_1)},\dots,i^{(j_k)}\in \{ 0,\dots,M-1\}$
\[
  \sum_{\substack{\mathbf{A} \in \mathcal{A}_1:\ \forall \ell \in [k]\\ A_{j_\ell,1}=i^{(j_\ell)}}}{p_{\mathbf{A}}}>1,
\]
Then almost surely, conditioned on $E\neq \emptyset$, for all distinct $j_1,\dots,j_k\in [d]$ the sets $proj_{(\mathbf{e}_{j_1},\dots,\mathbf{e}_{j_k})}(E)$ have nonempty interior.

On the other hand, if for distinct $j_1,\dots,j_k\in [d]$ and for $i^{(j_1)},\dots,i^{(j_k)}\in \{ 0,\dots,M-1\}$ we have
\[
  \sum_{\substack{\mathbf{A} \in \mathcal{A}_1:\ \forall \ell \in [k]\\ A_{j_\ell,1}=i^{(j_\ell)}}}{p_{\mathbf{A}}}<1,
\]
then almost surely the interior of $proj_{(\mathbf{e}_{j_1},\dots,\mathbf{e}_{j_k})}(E)$ is empty.
\end{thm}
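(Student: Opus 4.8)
\emph{Proof strategy.} The plan is to treat the two halves separately; the statement about empty interior is the easier one, so I would do it first. Fix the distinguished indices $j_1,\dots,j_k\in[d]$ and digits $i^{(j_1)},\dots,i^{(j_k)}$ with $\mu^\ast:=\sum_{\mathbf A\in\mathcal A_1:\ A_{j_\ell,1}=i^{(j_\ell)}\ \forall\ell}p_{\mathbf A}<1$, write $v^\ast=(i^{(j_1)},\dots,i^{(j_k)})$ and abbreviate $proj=proj_{(\mathbf e_{j_1},\dots,\mathbf e_{j_k})}$. Since $proj(E)$ is compact, it has non-empty interior if and only if it contains some $M$-adic $k$-dimensional cube $K_Q$; as there are only countably many such $Q$, it suffices to show $\Pv(K_Q\subseteq proj(E))=0$ for each fixed $Q$, say of level $n$. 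I would pick the point $y\in K_Q$ whose $M$-adic digit columns are those of $Q$ followed by infinitely many copies of $v^\ast$: if $K_Q\subseteq proj(E)=\bigcap_m proj(E_m)$ then $y\in proj(E_m)$ for all $m$, i.e., writing $\zeta_m$ for the number of $\mathbf A\in\mathcal E_m$ whose projected cube contains $y$, we have $\zeta_m\geq1$ for all $m$. But $(\zeta_m)_{m\geq n}$ is, conditionally on $\mathcal E_n$, a Galton--Watson process started from the finite value $\zeta_n$ with offspring mean $\mu^\ast<1$ (a retained $d$-cube has $M^{d-k}$ children whose projection still covers the relevant cell, kept independently, and the corresponding probabilities sum to $\mu^\ast$). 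A subcritical Galton--Watson process dies out almost surely, hence $\Pv(\zeta_m\geq1\ \forall m)=0$ and $\Pv(K_Q\subseteq proj(E))=0$. (Note that bounding $\Ev\,\mathrm{Leb}_k(proj(E))$ would \emph{not} do here, since the remaining slabs may be supercritical and $proj(E)$ of positive measure.)

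For the converse direction assume $\mu(v):=\sum_{\mathbf A\in\mathcal A_1:\ A_{j_\ell,1}=i^{(j_\ell)}\ \forall\ell}p_{\mathbf A}>1$ for every choice of distinct $j_1,\dots,j_k$ and every digit column $v=(i^{(j_1)},\dots,i^{(j_k)})$, and fix one coordinate projection, still denoted $proj$, together with its $\mu(\cdot)$. Since there are only finitely many coordinate $k$-planes, a union bound reduces the claim to: almost surely on $\{E\neq\emptyset\}$, the fixed $proj(E)$ has non-empty interior. Here I would use that $\sum_{\mathbf A\in\mathcal A_1}p_{\mathbf A}=\sum_v\mu(v)>M^k\geq2$, so $(|\mathcal E_n|)_n$ is a supercritical Galton--Watson process and $|\mathcal E_n|\to\infty$ a.s.\ on $\{E\neq\emptyset\}$, while conditionally on $\mathcal E_n$ the sets $E\cap K_{\mathbf A}$ ($\mathbf A\in\mathcal E_n$) are i.i.d.\ rescaled copies of $E$; hence the events $\{proj(E\cap K_{\mathbf A})\text{ has non-empty interior}\}$ are conditionally independent with common probability $\rho:=\Pv(proj(E)\text{ has non-empty interior})$. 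Consequently, once
\be\label{td-pos}
\rho>0
\ee
is known, $\Pv(proj(E)\text{ has empty interior}\mid\mathcal E_n)\leq(1-\rho)^{|\mathcal E_n|}\to0$ on $\{E\neq\emptyset\}$, and letting $n\to\infty$ finishes the argument.

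To establish \eqref{td-pos} I would first fix the point $o\in[0,1]^k$ all of whose digit columns equal some digit column $v_0$, and let $\xi_m$ be the number of $\mathbf A\in\mathcal E_m$ whose projected cube contains $o$; as above $(\xi_m)_m$ is a Galton--Watson process, now \emph{homogeneous}, with offspring mean $\mu(v_0)=:1+\delta>1$, hence supercritical, so $\Pv(\exists m:\xi_m\geq\Theta)>0$ for every fixed $\Theta$. The heart of the matter is a threshold estimate: there is $\Theta=\Theta(M,d,k,\delta)$ such that if a level-$m$ cell $K_Q$ has at least $\Theta$ retained $d$-cubes above it, then, conditionally on $\mathcal E_m$, with probability at least $\tfrac12$ every descendant cell of $K_Q$ at every later level again has a retained $d$-cube above it --- that is, $K_Q\subseteq proj(E_{m'})$ for all $m'\geq m$, so $K_Q\subseteq proj(E)$. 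The proof of this estimate is where the real work sits: if $K_Q$ has $\xi$ retained $d$-cubes above it, then for each of its $M^k$ children the number of retained $d$-cubes above it is a sum of $\xi M^{d-k}$ independent Bernoulli variables of conditional mean $\xi\mu(v)\geq\xi(1+\delta)$ ($v$ the digit column of the child), so by Chernoff's inequality this count is $\geq\xi(1+\delta/2)$ except with probability at most $e^{-c_\delta\xi}$, $c_\delta=\tfrac12\big(\tfrac{\delta/2}{1+\delta}\big)^2$; iterating from the threshold, the probability that some level-$(m+j)$ descendant of $K_Q$ ever drops below $\Theta(1+\delta/2)^j$ is at most $\sum_{j\geq0}M^{kj}\exp\!\big(-c_\delta\Theta(1+\delta/2)^j\big)$, which falls below $\tfrac12$ once $\Theta$ is large enough. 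Combining this with $\Pv(\exists m:\xi_m\geq\Theta)>0$ gives $\rho\geq\tfrac12\Pv(\exists m:\xi_m\geq\Theta)>0$, as needed; the case $k\geq2$ differs from $k=1$ only in that the $M$-ary tree of subintervals is replaced by the $M^k$-ary tree of sub-cubes. I expect the main obstacle to be precisely this uniform large-deviation bookkeeping: one must keep all the means $\mu(v)$ bounded away from $1$ simultaneously along an arbitrary branch (which is exactly what the hypothesis provides) and sum the per-cell failure probabilities over the super-exponentially many descendant cells at all future levels, the point being that the doubly-exponential decay of each such probability beats the $M^{kj}$ growth.
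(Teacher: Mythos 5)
The paper does not prove this statement: Theorem \ref{td88} is imported verbatim from Falconer and Grimmett \cite{FG} and used as a black box for the coordinate-plane directions, so there is no in-paper proof to compare against. Your argument is, in substance, the original argument of \cite{FG}: for the empty-interior half, a subcritical Galton--Watson process obtained by following the distinguished digit column $v^\ast$ inside an arbitrary $M$-adic cell; for the nonempty-interior half, positivity of $\rho$ via a threshold-plus-Chernoff lemma (doubly exponential per-cell failure probabilities beating the $M^{kj}$ growth of descendant cells), upgraded to an almost-sure statement on $\{E\neq\emptyset\}$ by the usual independence of the subtrees above the level-$n$ retained cubes. The strategy and the quantitative bookkeeping are both correct.

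One technical point deserves repair. You define $\zeta_m$ (and later $\xi_m$) as the number of retained $\mathbf A\in\mathcal E_m$ whose \emph{closed} projected cube contains the chosen point. If that point lies on the boundary of the level-$m$ mesh cells of the $k$-plane --- which is exactly what happens for your $y$ whenever $v^\ast$ has an entry $0$ or $M-1$, and for $o$ whenever $M=2$ --- then cubes sitting over several adjacent cells all contribute, and the resulting count is \emph{not} a Galton--Watson process with offspring mean $\mu^\ast$ (a parent over an adjacent cell spawns children over every cell incident to the point, so the effective offspring mean is a sum of several $\mu(v)$'s and need not be below $1$). The fix is standard: count instead the retained $\mathbf A$ whose digit columns in rows $j_1,\dots,j_k$ match the prescribed digit string, equivalently the cubes lying over the specific level-$m$ cell $C_m$ with columns $Q$ followed by copies of $v^\ast$; this \emph{is} a Galton--Watson process with mean $\mu^\ast$, and $K_Q\subseteq proj(E)\subseteq proj(E_m)$ still forces at least one such cube because the closed shadow of any cube over a different cell meets $int(C_m)$ in a Lebesgue-null set and cannot cover it. With that modification (and the symmetric one in the positive half) your proof goes through.
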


In addition, we consider radial and co-radial projections as well.
\begin{defi}[Radial and co-radial projection]\label{td2345}
Given $t\in \mathbb{R}^d$:
\begin{itemize}
	\item the \emph{radial projection} with center $t$ of set $E$ is denoted by $\mathrm{Proj}_t(E)$ and is defined as the set of unit vectors under which points of $E\setminus\{t\}$ are visible from $t$. In particular, $\mathrm{Proj}_t$ maps to the $d-1$-sphere $S^{d-1}$.
	\item the \emph{co-radial projection} with center $t$ of set $E$ is denoted by $\mathrm{CProj}_t(E)$ and is defined as the set of distances between $t$ and points from $E$. In particular, $\mathrm{CProj}_t$ maps to $\mathbb{R}_+$.
\end{itemize}
\end{defi}

\subsection{Results}

 In Section \ref{td200} we  define Condition $A(\alpha)$ on the set of probabilities $\{p_\mathbf{A}\}_{\mathbf{A} \in \mathcal{A}_1}$. This is which is the key ingredient  for the following theorems. At this point it is enough to know that if $p_\mathbf{A}>M^{-(d-k)}$ for all $\mathbf{A} \in \mathcal{A}_1$, then Condition $A(\alpha)$ holds for all $\alpha$.
\begin{thm}\label{td19}
Fix $d\geq 2$ and $1\leq k<d$ and suppose that Condition $A(\alpha)$ holds for all $\alpha$ such that $S_\alpha$ is not a coordinate plane. In addition, to control parallel directions we suppose that for all distinct $j_1,\dots,j_k\in[d]$ and for all $i^{(j_1)},\dots,i^{(j_k)}\in \{ 0,\dots,M-1\}$
\[
  \sum_{\substack{\mathbf{A} \in \mathcal{A}_1:\ \forall \ell \in [k]\\ A_{j_\ell,1}=i^{(j_\ell)}}}{p_{\mathbf{A}}}>1,
\]
Then almost surely for \emph{all} $\alpha$ orthogonal projections $proj_\alpha(E)$ have nonempty interior, conditioned on $E\neq \emptyset$.
\end{thm}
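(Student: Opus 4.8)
The plan is to follow the strategy of \cite{RS}, separating the analysis into two regimes: the coordinate-plane directions, which are handled directly by Theorem \ref{td88}, and the non-coordinate directions, which are governed by Condition $A(\alpha)$. The first step is to reduce the ``for all $\alpha$ simultaneously'' statement to a family of events indexed by a countable net of directions, together with a uniformity (equicontinuity) estimate that lets one pass from the net to all $\alpha$. Concretely, I would fix a large level $n_0$ and cover the (compact) Grassmannian of $k$-planes by finitely many neighborhoods; inside each neighborhood the combinatorial pattern of which level-$n_0$ subcubes project onto a given small box in $S_\alpha$ is locally constant, so it suffices to produce, for each cube in the net, a deterministic sub-box of $\mathrm{proj}_\alpha(E)$ that persists under small perturbations of $\alpha$. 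Since the union over the net is countable, an almost-sure statement for each fixed $\alpha$ upgrades to an almost-sure statement for all $\alpha$.

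The heart of the matter is the fixed-$\alpha$, non-coordinate case. Here the plan is a percolation/branching-process comparison run at the level of ``columns'': for a target small box $Q\subset S_\alpha$, call a level-$n$ cube $K_\mathbf{A}\subseteq K$ \emph{good} if $\mathrm{proj}_\alpha(K_\mathbf{A})$ covers a definite fraction of $Q$ (rescaled), and track how many retained good descendants a good cube has at the next level. Condition $A(\alpha)$ is precisely the hypothesis that guarantees the mean number of such retained good children exceeds $1$ with room to spare, so the associated multitype Galton--Watson process is supersymmetric and survives with positive probability; a standard second-moment / Borel--Cantelli argument along the $M^{-n}$ scales then shows that on survival the projection actually \emph{fills} an interval-box, not merely has positive measure. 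The key geometric input that makes the ``goodness'' of a cube behave well under projection in dimension $d>2$ with $1\le k<d$ is Lemma \ref{td1} (the excerpt flags it as where the higher-dimensional difficulty is concentrated): it controls how a $k$-dimensional orthogonal projection of an $M^{-n}$-cube sits inside $S_\alpha$ uniformly in $\alpha$, so that the combinatorial bookkeeping of which children cover which sub-boxes is robust.

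Combining the two regimes: the displayed hypothesis $\sum_{A_{j_\ell,1}=i^{(j_\ell)}} p_\mathbf{A}>1$ is exactly the hypothesis of Theorem \ref{td88}, which gives nonempty interior for every coordinate projection almost surely (and, re-reading its proof, simultaneously for all finitely many coordinate planes); Condition $A(\alpha)$ plus the net argument gives it for all non-coordinate $\alpha$ on a single almost-sure event. Intersecting the two almost-sure events and conditioning on $E\neq\emptyset$ finishes the proof. I expect the main obstacle to be the uniformity step: ensuring that the ``good cube'' survival estimate, and the resulting sub-box, can be chosen \emph{uniformly over a neighborhood of directions} rather than for one $\alpha$ at a time — this is where one genuinely needs Lemma \ref{td1} and a careful choice of the net so that the exceptional null sets (one per net element) do not accumulate, and it is also the place where the passage from $d=2$ to general $d$ costs the most, since the geometry of projecting cubes onto $k$-planes is no longer the elementary interval arithmetic of \cite{RS}.
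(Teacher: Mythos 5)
Your overall architecture matches the paper's: coordinate planes are dispatched by Theorem \ref{td88}, non-coordinate directions by Condition $A(\alpha)$ together with a net-plus-robustness argument and a branching-process comparison for a counting function. But there is a genuine gap in the probabilistic engine. You propose a ``standard second-moment / Borel--Cantelli argument'' to show the supercritical process survives and that the projection fills a box. A second-moment bound gives survival with positive probability for \emph{one} fixed point $x$ and \emph{one} fixed direction; it does not let you conclude that every point of a target box lies in $\mathrm{proj}_\alpha(E)$ for every $\alpha$ simultaneously, because at level $n$ you must control a net of roughly $cM^{2kdnr}$ pairs $(\mathbf x,\theta)$ at once, and this count grows exponentially in $n$. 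The paper instead proves a doubly-exponential concentration bound: conditioned on $V_n(\mathbf y,\kappa)\geq (3/2)^n$, the probability of failing to reach $(3/2)^{n+1}$ at the next level is at most $\rho^{(3/2)^n}$ by the Azuma--Hoeffding inequality (the contributions of distinct level-$nr$ cubes are independent, bounded by $M^{dr}$, with conditional mean at least $2$ by Condition $A$). Only this super-exponentially small failure probability survives the union bound over the nets $X_n\times Y_n$, giving $\prod_n\bigl(1-\rho^{(3/2)^n}\bigr)^{cM^{2kdnr}}>0$. Relatedly, your claim that ``the union over the net is countable, so an almost-sure statement for each fixed $\alpha$ upgrades to all $\alpha$'' is not a valid passage: nonemptiness of the interior is not a continuous property of $\alpha$, and the paper's actual mechanism is the quantitative robustness statement (Proposition \ref{td23} and Corollary \ref{td55}), which yields the pointwise comparison $G_\beta^r\ind_{I_2}(\psi_{\mathbf A}(x))\geq G_\alpha^r\ind_{I_1}(\psi_{\mathbf A}(y))$ for nearby pairs and thereby transfers the counting bound from net points to all nearby parameters at every finite level.

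Two smaller corrections. Lemma \ref{td1} is not an ingredient of the proof of Theorem \ref{td19}: it is used only to show that Condition $B$ implies Condition $A$, whereas Theorem \ref{td19} takes Condition $A$ as a hypothesis; the geometric uniformity you need here is supplied instead by the robustness proposition, which rests on the determinant lower bound in \eqref{td49}. Finally, the scheme as you describe it yields nonempty interior only with positive probability; you still need the standard upgrade used in Section \ref{td17}, namely that conditioned on $E\neq\emptyset$ there are eventually arbitrarily many independent retained cubes whose subtrees survive, so positive probability implies probability one.
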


\begin{thm}\label{td20}
Fix $d \geq 2$.
\begin{enumerate}
	\item Fix $k=d-1$ and suppose that Condition $A(\alpha)$ holds for all $\alpha$ such that $S_\alpha$ is not a coordinate plane. Then for almost all realizations of $E$ such that $E\neq \emptyset$, simultaneously for \emph{all} $t$ radial projections $\mathrm{Proj}_t(E)$ have nonempty interior.
	
	\item Fix $k=1$ and suppose that Condition $A(\alpha)$ holds for all $\alpha$ such that $S_\alpha$ is not a coordinate plane. Then for almost all realizations of $E$ such that $E\neq \emptyset$, simultaneously for \emph{all} $t$ co-radial projections $\mathrm{CProj}_t(E)$ have nonempty interior.
\end{enumerate}
\end{thm}

The rest of the paper is organized as follows. In Section \ref{td40} we consider orthogonal projections of the fractal percolation and prove Theorem \ref{td19}. Then in Section \ref{td41} we turn our attention to radial and co-radial projections, and using the same argument as in \cite{RS} we show that Theorem \ref{td20} holds.

\section{Orthogonal projections}\label{td40}

Since the range of the projection $proj_\alpha$ is different for different $\alpha$
it is more convenient  to substitute $proj_\alpha$ with a projection $\Pi_\alpha$
to some coordinate plane.

\subsection{Projection to coordinate axes}

Put $\binom{[d]}{k}$ for the $k$ element subsets of $[d]$. Then, for $\mathcal{I}=\{i_1,\dots,i_k\}\in \binom{[d]}{k}$ let $S_\mathcal{I}$ stand for the coordinate-plane spanned by the unit vectors corresponding to $\mathcal{I}$:
\be \label{td47}
  S_\mathcal{I}:=span\Big\{\mathbf e^{(i_1)},\dots,\mathbf e^{(i_k)} \Big\}.
\ee
Then $\Pi_{\alpha,\mathcal{I}}$ is the linear projection to $S_\mathcal{I}$ in direction $\gamma_\alpha$, that is for $\mathbf x \in \mathbb{R}^d$
\be \label{td48}
  \Pi_{\alpha,\mathcal{I}}(\mathbf x)= \Big\{span\{ \mathbf{c}^{(1)},\dots,\mathbf{c}^{(d-k)}\}+\mathbf x\Big\}\cap S_\mathcal{I},
\ee
where $\mathbf{c}^{(1)},\dots,\mathbf{c}^{(n-k)}$ are the vectors of $\gamma_\alpha$. We will see soon that if we choose a suitable $\mathcal{I}$, then the right-hand side of \eqref{td48} is a  single point.

To describe the projection in \eqref{td48} by matrix operations we introduce $\matr C=\matr{C}^\alpha$ as the $d\times (d-k)$ matrix whose column vectors are $\mathbf{c}^{1}, \dots ,\mathbf{c}^{d-k}$:
\[
\matr C=
\left(
\begin{array}{c|c|c}
\mathbf c^{(1)} & \dots & \mathbf c^{(d-k)}
\end{array}
\right).
\]
Let $\mathcal{I}^c:=[d]\setminus \mathcal{I}$ and let $i_1<\dots <i_k$ and $i_{k+1}<\dots <i_d$  be the elements of $\mathcal{I}$ and $\mathcal{I}^c$ respectively. We define the $d\times d$ matrix $\matr{M}(\mathcal{I})=\matr{M}^\alpha(\mathcal{I})$ by
\be \label{td56}
  \matr{M}(\mathcal{I}):=
\left(
\begin{array}{c|c}
\matr I & -\matr {C}_1 \\
\hline
\matr 0 & -\matr {C}_2
\end{array}
\right),
\ee
where $\matr I$ is the $k\times k$ identity matrix; $\matr 0$ is a $(d-k)\times k$ matrix with all zero entries; $\matr {C}_1=\matr{C}_1^\alpha$ is a $k\times (d-k)$ matrix whose $\ell$-th row is the $i_\ell$-th row of $\matr C$; and $\matr {C}_2=\matr{C}_2^\alpha$ is a $(d-k)\times (d-k)$ matrix whose $\ell$-th row is the $i_{k+\ell}$-th row of $\matr C$.

Using the Cauchy-Binet formula \cite[Section 4.6, page 208-214]{CB} we obtain that
\[
  1=\det\Big(\matr{C}^T\matr{C}\Big)=\sum_{\mathcal{I}\in \binom{[d]}{k}}{\bigg(\det\Big(\matr {C}_2(\mathcal{I})\Big)\bigg)^2}.
\]
Therefore there exists $\mathcal{I}'\in \binom{[d]}{k}$ such that $\det (\matr{M}(\mathcal{I'}))=-\det (\matr {C}_2(\mathcal{I'}))\neq 0$. We use the notation $\matr{M}=\matr{M}^\alpha:=\matr{M}^\alpha(\mathcal{I}')$. In order to find a formula for $\Pi_{\alpha,\mathcal{I}'}(\mathbf x)$ fix an $\mathbf x \in \mathbb{R}^d$ and let
\[
  \mathbf h:=(\matr{M}^\alpha)^{-1}\mathbf x.
\]
Then it is easy to see that
\be \label{td51}
  \Pi_{\alpha,\mathcal{I}'}(\mathbf x)=\sum_{j=1}^k{h_j\mathbf e^{(i_j)}}=\sum_{j=k+1}^d{h_{j}\mathbf c^{(j-k)}+\mathbf x}.
\ee

By symmetry, without any loss of generality we may assume that
\[
  \mathcal{I}'=\{1,\dots, k\}
\]
and restrict the set of directions $\alpha$ to
\be \label{td49}
  A_{\mathcal{I}'}=\Bigg\{\alpha\ \Bigg|\ \left| det\Big(\matr {C_2}(\mathcal{I}')\Big)\right| > \frac{1}{2\sqrt{\binom{d}{k}}}\Bigg\}.
\ee
We remark that in \eqref{td49} the matrix $C_2(\mathcal{I}')$ depends on $\alpha$.
Let $\Pi_\alpha$ be the projection to $S_{\mathcal{I}'}$ for $\alpha\in A_{\mathcal{I}'}$. For later computations we write \eqref{td51} in a more tractable form. For $\mathbf y\in \mathbb{R}^d$ let $\mathbf{y}_1\in \mathbb{R}^k$ and $\mathbf{y}_2\in \mathbb{R}^{d-k}$ be the vectors formed by the first $k$ and last $d-k$ elements of $\mathbf y$ respectively. Then for $\mathbf x\in \mathbb{R}^d$
\be \label{td89}
  \Pi_{\alpha}(\mathbf x)=\mathbf{x}_1-\matr{C}_1\ \matr{C}_2^{-1}\mathbf{x}_2.
\ee

 It is clear that for any $\alpha$,  $int\{ proj_{\alpha}E\} \neq \emptyset$ iff $int\{ \Pi_{\alpha}E\} \neq \emptyset$. In addition, $\Pi_{\alpha}E$ lays in the same plane for all $\alpha$, which will be useful when considering several directions at once, e.g. when considering nonlinear projections. Now we introduce the higher dimensional analogue of the notation used in \cite{RS}.

\subsection{Conditions A and B}\label{td200}
Let us denote by $\Delta_{\alpha}$ the $\Pi_\alpha$ projection of the unit cube $K$. For $\mathbf{A} \in \mathcal{A}_n$ we introduce the function $\psi_{\alpha,\mathbf{A}}:\Delta_\alpha\to \Delta_\alpha$ as the inverse of $\Pi_\alpha\circ \varphi_{\mathbf{A}}$.

The following operators are defined on functions from $\Delta_\alpha$ to nonnegative reals, vanishing on the boundary of $\Delta_\alpha$. These are one of the main tools of this paper, and are defined by
\[
  G_{\alpha,n} f(x)=\sum_{\mathbf{A} \in \mathcal{E}_n: x\in \Pi_\alpha (K_{\mathbf{A}})}
  {f\circ \psi_{\alpha,\mathbf{A}} (x)}.
\]
and given $G_{\alpha,n}$ we define $F_{\alpha,n}$ as
\[
  F_{\alpha,n}=\Ev\big( G_{\alpha,n}\big).
\]
That is, for $n=1$
\[
  F_\alpha f(x):=F_{\alpha,1}f(x)=\sum_{\mathbf{A} \in \mathcal{A}_1: x\in \Pi_\alpha (K_{\mathbf{A}})}{p_{\mathbf{A}}\cdot f\circ \psi_{\alpha,\mathbf{A}} (x)}.
\]
It is easy to see that $F_{n,\alpha}$ equals to the $n$-th iterate of $F_\alpha$:
\[
  F_{\alpha,n} f(x)=F_\alpha^n f(x)=\sum_{\mathbf{A} \in \mathcal{A}_n: x\in \Pi_\alpha (K_{\mathbf{A}})}
  {p_{\mathbf{A}}\cdot f\circ \psi_{\alpha,\mathbf{A}} (x)},
\]
where for $\mathbf{A}=
\left(
\begin{array}{c|c|c}
\mathbf a^{(1)} & \dots & \mathbf a^{(n)}
\end{array}
\right)
\in \mathcal{A}_n$ we write $p_{\mathbf{A}}=\prod_{j=1}^n{p_{\mathbf a^{(j)}}}$.

Now we present the higher dimensional analogue of
Conditions A and B of \cite{RS}.

\begin{defi}[Condition A]
We say that Condition $A(\alpha)$ holds if there exist $I_1^{\alpha},I_2^{\alpha}\subset \Delta_{\alpha}$ homothetic copyes of $\Delta_{\alpha}$
 with homoteties having the same centre as $\Delta_\alpha$,
  and there exists a positive integer $r=r_\alpha$ such that
\[ \ba
  &(i) I_1^{\alpha}\subset int\{ I_2^{\alpha}\},\ I_2^{\alpha}\subset int\{ \Delta_{\alpha}\},     \\
  &(ii) F^{r}_{\alpha}\ind_{I_1^{\alpha}}\geq 2\cdot\ind_{I_2^{\alpha}}.
\ea\]

\end{defi}
This is the place where the geometrical complexity of the problem differs from that of the original case in \cite{RS}: If $d=2$ and $k=1$, then $\Delta_{\alpha}$ is simply a line segment. However, if, for example $d=3$ and $k=2$, then $\Delta_{\alpha}$ is a hexagon, which carries some extra technical difficulties in the proof in the next section. Figure \ref{tdfig1} shows the mutual position of $\Delta_\alpha$, $I_2^{\alpha}$ and $I_1^{\alpha}$ for a fixed $\alpha$ and for $d=3$, $k=2$.

The following condition is stronger than Condition $A$, but it is easier to check.

\begin{defi}[Condition $B$]
We say that Condition $B(\alpha)$ holds if there exists a nonnegative continuous function $f: \Delta_{\alpha}\to \mathbb{R}$ such that $f$ vanishes exactly on the boundaries of $\Delta_{\alpha}$ and $\exists \varepsilon>0$:
\be \label{td2}
  F_{\alpha}f\geq (1+\varepsilon)f.
\ee
\end{defi}
In the following sections we show that Condition $B$ implies Condition $A$ (see Section \ref{td42}), which implies that $int\{ \Pi_{\alpha}E\} \neq \emptyset$ conditioned on $E\neq \emptyset$ (see Section \ref{td43}), and for certain choice of the parameters $\{p_\mathbf{A}\}_{\mathbf{A} \in \mathcal{A}_1}$ we show some functions $f$ satisfying Condition $B(\alpha)$ for all $\alpha$ (see Section \ref{td44}).

\subsection{Condition B implies Condition A}\label{td42}

\begin{prop}\label{td9}
Condition $B(\alpha)$ implies condition $A(\alpha)$ whenever
 $S_\alpha$ is not a coordinate plane.
\end{prop}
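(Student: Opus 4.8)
The plan is to show that Condition $B(\alpha)$ gives us everything Condition $A(\alpha)$ asks for, by iterating the inequality \eqref{td2} and then exploiting the compactness/continuity of the function $f$ together with the fact that (when $S_\alpha$ is not a coordinate plane) $\Pi_\alpha$ genuinely mixes coordinates, so $\Delta_\alpha$ is a nondegenerate convex polytope (a segment when $d=2,k=1$, a hexagon when $d=3,k=2$, etc.) with nonempty interior. First I would iterate \eqref{td2}: since $F_\alpha$ is a positive linear operator (it has nonnegative coefficients $p_{\mathbf A}$), applying it $m$ times to $F_\alpha f \ge (1+\ve)f$ yields $F_\alpha^m f \ge (1+\ve)^m f$ for every $m$. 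The point of raising to a power is to create the factor $2$ demanded in $(ii)$ of Condition $A$: fix $r=r_\alpha$ large enough that $(1+\ve)^r \ge 2\cdot \frac{\sup_{\Delta_\alpha} f}{\inf_{I_2^\alpha} f}$ once the compact sets $I_1^\alpha, I_2^\alpha$ have been chosen below, so that $F_\alpha^r f \ge 2 \sup f \cdot \ind_{I_2^\alpha} / (\text{something}) $ — more precisely, so that on $I_2^\alpha$ we have $F_\alpha^r f \ge (1+\ve)^r \inf_{I_2^\alpha} f \ge 2\sup_{\Delta_\alpha} f$.

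Next I would pass from the continuous function $f$ to the indicator functions appearing in Condition $A$. Because $f$ is continuous, nonnegative, and vanishes exactly on $\partial\Delta_\alpha$, for any homothetic copy $I$ of $\Delta_\alpha$ compactly contained in $int\{\Delta_\alpha\}$ we have $m_I := \inf_{I} f > 0$ and $\|f\|_\infty = \sup_{\Delta_\alpha} f < \infty$. Choose the nested homothetic copies $I_1^\alpha \subset int\{I_2^\alpha\}$, $I_2^\alpha \subset int\{\Delta_\alpha\}$ (all sharing the centre of $\Delta_\alpha$, as the definition requires — this is possible precisely because $\Delta_\alpha$ has nonempty interior). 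Then $f \ge m_{I_1^\alpha}\ind_{I_1^\alpha}$ on all of $\Delta_\alpha$ (trivially, since $f\ge 0$ everywhere and $f \ge m_{I_1^\alpha}$ on $I_1^\alpha$). Apply the monotone positive operator $F_\alpha^r$ to get
\[
  F_\alpha^r \ind_{I_1^\alpha} \ge \frac{1}{m_{I_1^\alpha}} F_\alpha^r f \ge \frac{(1+\ve)^r}{m_{I_1^\alpha}} f \ge \frac{(1+\ve)^r}{m_{I_1^\alpha}}\, m_{I_2^\alpha}\,\ind_{I_2^\alpha},
\]
and now choosing $r=r_\alpha$ so large that $(1+\ve)^r \ge 2 m_{I_1^\alpha}/m_{I_2^\alpha}$ forces $F_\alpha^r \ind_{I_1^\alpha} \ge 2\ind_{I_2^\alpha}$, which is exactly $(ii)$. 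Condition $(i)$ holds by the choice of $I_1^\alpha, I_2^\alpha$.

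I expect the genuine content — not a routine manipulation — to be the verification that, when $S_\alpha$ is \emph{not} a coordinate plane, $\Delta_\alpha$ is indeed a convex body with nonempty interior so that the homothetic copies in Condition $A$ can be found, and that $F_\alpha$ really maps functions vanishing on $\partial\Delta_\alpha$ to functions vanishing on $\partial\Delta_\alpha$ (so the iteration stays in the right function class and the infima $m_{I_j^\alpha}$ control the right quantities). The coordinate-plane exclusion is essential here: if $S_\alpha$ were a coordinate plane then $\Pi_\alpha$ would collapse a full cube face and $\Delta_\alpha$ would be a lower-dimensional set, the $\psi_{\alpha,\mathbf A}$ would not be the contractions we need, and the scheme would degenerate — which is why that case is instead handled by Theorem \ref{td88}. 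The remaining steps (positivity and monotonicity of $F_\alpha$, the iteration $F_\alpha^m f \ge (1+\ve)^m f$, the elementary estimates with $\sup f$ and $\inf_I f$) are straightforward once the geometric setup is in place.
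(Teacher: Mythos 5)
There is a genuine gap, and it sits exactly where you declare the argument to be routine. Your key chain of inequalities starts from $f\ge m_{I_1^{\alpha}}\ind_{I_1^{\alpha}}$ and then "applies the monotone positive operator $F_\alpha^r$" to conclude $F_\alpha^r \ind_{I_1^{\alpha}}\ge \frac{1}{m_{I_1^{\alpha}}}F_\alpha^r f$. Monotonicity gives the \emph{opposite} inequality: from $f\ge m_{I_1^{\alpha}}\ind_{I_1^{\alpha}}$ you get $F_\alpha^r f\ge m_{I_1^{\alpha}}F_\alpha^r\ind_{I_1^{\alpha}}$, i.e.\ an \emph{upper} bound on $F_\alpha^r\ind_{I_1^{\alpha}}$ in terms of $F_\alpha^r f$, which is useless for establishing $(ii)$. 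To get a lower bound you must instead write $\ind_{I_1^{\alpha}}\ge \frac{1}{\sup f}\, f\cdot\ind_{I_1^{\alpha}}$ and then bound $F_\alpha^r\bigl(f\cdot\ind_{I_1^{\alpha}}\bigr)$ from below on $I_2^{\alpha}$. But Condition $B$ only controls $F_\alpha f$ for the \emph{full} function $f$, not for its truncation to $I_1^{\alpha}$; the difference $F_\alpha^{n}f-F_\alpha^{n}(f\ind_{I_1^{\alpha}})$ collects all terms $p_{\mathbf A}\,f(\psi_{\alpha,\mathbf A}(x))$ for which $\psi_{\alpha,\mathbf A}(x)$ lands in the collar $\Delta_\alpha\setminus I_1^{\alpha}$ near $W_0$. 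Each such term is small (continuity of $f$), but the \emph{number} of level-$n$ cubes $K_{\mathbf A}$ with $x\in\Pi_\alpha(K_{\mathbf A})$ and $\psi_{\alpha,\mathbf A}(x)$ close to the boundary can a priori be a non-negligible fraction of all cubes covering $x$, since for $k>1$ the projected boundaries $W_n$ are not separated from $W_0$. Showing that this fraction is uniformly bounded away from $1$ is the actual content of the paper's Lemma \ref{td1} (the decomposition of the faces of $\mathcal F_0$ into rational and irrational classes, the counting estimate \eqref{td5}, and the choice of $\eta$ via \eqref{td4} and \eqref{td11}); your proposal never confronts it.

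Once Lemma \ref{td1} is available, the paper's deduction of Proposition \ref{td9} is exactly the easy amplification step you describe (iterate $(1+\varepsilon)$ until it beats $2\max_{I_1}g_1/\min_{I_2}g_2$), so the final paragraph of your proposal is in the right spirit. But the statement you flag as "the genuine content" (that $\Delta_\alpha$ is a nondegenerate polytope) is not the hard part; the hard part is the boundary-counting argument above, and without it the passage from the continuous eigenfunction inequality of Condition $B$ to the indicator inequality of Condition $A$ does not go through.
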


In order to verify Proposition \ref{td9}, first we state Lemma \ref{td1}. Then we prove Proposition \ref{td9} using Lemma \ref{td1}. Finally we prove Lemma \ref{td1}.

\begin{lemma}\label{td1}
  Suppose that condition $B(\alpha)$ holds for some $\alpha$ such that $S_\alpha$ is not a coordinate plane. Then there exists an integer $n>0$ and there exist sets $I_1^{\alpha},I_2^{\alpha}\subset \Delta_{\alpha}$ with the same properties as in the definition of Condition $A$ except  $(ii)$  is replaced with $(ii^*)$:
\[
  (ii^*) \forall x\in I_2^{\alpha}\ \ F^n_{\alpha}g_1(x)\geq \left( 1+\varepsilon \right)g_2(x),
\]
where $g_1=f|_{I_1}$, $g_2=f|_{I_2}$.
\end{lemma}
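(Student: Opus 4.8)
The plan is to build the sets $I_1^\alpha, I_2^\alpha$ and the exponent $n$ out of the data supplied by Condition $B(\alpha)$, namely the function $f$ with $F_\alpha f \ge (1+\varepsilon)f$ and $f$ vanishing exactly on $\partial\Delta_\alpha$. First I would iterate the inequality $r$ times to get $F_\alpha^r f \ge (1+\varepsilon)^r f$ for every $r$; since $\varepsilon>0$ is fixed this gives us as large a multiplicative gain as we like on all of $\Delta_\alpha$, which is the slack we will spend on the geometric truncation. The point of the lemma is that $(ii^*)$ is weaker than $(ii)$ in two respects — it allows the factor $1+\varepsilon$ instead of $2$, and it compares the truncated functions $g_1 = f|_{I_1^\alpha}$, $g_2 = f|_{I_2^\alpha}$ rather than indicators — so morally we just need to show that restricting $f$ to a slightly smaller homothetic copy $I_1^\alpha$ of $\Delta_\alpha$ loses only a controlled amount under $F_\alpha^n$, and that on the even smaller copy $I_2^\alpha$ the surviving mass still dominates $f$ by a factor $1+\varepsilon$.

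The key steps, in order: (1) Fix a small $\rho<1$ and let $I_2^\alpha = \rho\Delta_\alpha$, $I_1^\alpha = \rho'\Delta_\alpha$ with $\rho<\rho'<1$, all homotheties centred at the centre of $\Delta_\alpha$, so that $(i)$ holds automatically. (2) Estimate the ``boundary defect'': for a level-$n$ cube $K_\mathbf{A}$, the cylinder $\psi_{\alpha,\mathbf{A}}^{-1}$ image of $\Delta_\alpha \setminus I_1^\alpha$ is a shell of $\Pi_\alpha$-width comparable to $(1-\rho')M^{-n}$, and the number of level-$n$ cubes whose projection meets a fixed $x$ is bounded, so $F_\alpha^n(f\cdot\ind_{\Delta_\alpha\setminus I_1^\alpha})$ is small near the centre — precisely, it is bounded by $(\sup f)$ times a quantity that tends to $0$ as $n\to\infty$ once $\rho'$ is chosen, because $f$ is continuous and vanishes on $\partial\Delta_\alpha$ and the relevant preimages are squeezed toward $\partial\Delta_\alpha$. (3) Combine: $F_\alpha^n g_1 = F_\alpha^n f - F_\alpha^n(f\cdot\ind_{\Delta_\alpha\setminus I_1^\alpha}) \ge (1+\varepsilon)^n f - (\text{small})$, and on $I_2^\alpha = \rho\Delta_\alpha$ we have $f \ge \min_{\rho\Delta_\alpha} f > 0$ (again using $\rho<1$ and the vanishing locus of $f$), so for $n$ large the main term beats $(1+\varepsilon)g_2 = (1+\varepsilon)f$ with room to spare. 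One has to pick the parameters in the right order: first $\varepsilon$ is given; then choose $\rho$ so that $f|_{\rho\Delta_\alpha}$ is bounded below; then choose $\rho'\in(\rho,1)$; then take $n$ large.

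The main obstacle — and the reason this is stated as a separate lemma and flagged in the introduction as the new higher-dimensional difficulty — is step (2) in the geometry of $\Delta_\alpha$ when $S_\alpha$ is not a coordinate plane. In the plane ($d=2$, $k=1$) $\Delta_\alpha$ is an interval and ``the shell near the boundary'' is just two short subintervals; but for $d=3$, $k=2$ (and in general) $\Delta_\alpha$ is a zonotope (a hexagon in the pictured case), its homothetic shrink $\rho'\Delta_\alpha$ has a boundary of positive codimension-$1$ measure, and the projections $\Pi_\alpha(K_\mathbf{A})$ of the level-$n$ subcubes are themselves translated copies of $M^{-n}\Delta_\alpha$ that tile $\Delta_\alpha$ with overlaps. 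So to control $F_\alpha^n(f\cdot\ind_{\Delta_\alpha\setminus I_1^\alpha})(x)$ for $x\in I_2^\alpha$ one must check that for a point $x$ deep inside $\rho\Delta_\alpha$, every level-$n$ cube $K_\mathbf{A}$ with $x\in\Pi_\alpha(K_\mathbf{A})$ has $\psi_{\alpha,\mathbf{A}}(x)$ at distance $\ge c(\rho,\rho')>0$ from $\partial\Delta_\alpha$ — i.e. $\psi_{\alpha,\mathbf{A}}(x)\in I_1^\alpha$ — uniformly in $n$ and $\mathbf{A}$. This is a statement purely about the homothety structure: $\psi_{\alpha,\mathbf{A}}$ maps $\Pi_\alpha(K_\mathbf{A})$ affinely onto $\Delta_\alpha$, and since $x$ lies in the ``$\rho$-interior'' relative to all of $\Delta_\alpha$, it lies in the ``$\rho$-interior'' of $\Pi_\alpha(K_\mathbf{A})$ relative to that cell too whenever the cell is contained in $\Delta_\alpha$; the genuinely delicate case is the cells $K_\mathbf{A}$ whose projection sticks out past $\partial\Delta_\alpha$, which is exactly where the non-coordinate-plane hypothesis (equivalently $\alpha\in A_{\mathcal{I}'}$, the nondegeneracy $|\det\matr{C}_2(\mathcal{I}')|$ bounded below) is used to keep the distortion of $\psi_{\alpha,\mathbf{A}}$ bounded. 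Once that uniform interior estimate is in hand, the rest is the routine combination in step (3).
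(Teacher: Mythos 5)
Your overall architecture (iterate Condition $B$ to get $(1+\varepsilon)^n$, subtract the loss coming from truncating $f$ to $I_1^\alpha$, absorb the loss using the strict positivity of $f$ on the inner set) matches the paper's, but the step that carries all the difficulty is wrong. Your key claim is that for $x$ deep inside $\rho\Delta_\alpha$, \emph{every} level-$n$ cube $K_{\mathbf A}$ with $x\in\Pi_\alpha(K_{\mathbf A})$ satisfies $\psi_{\alpha,\mathbf A}(x)\in I_1^\alpha$ uniformly in $n$. This is false: $\Pi_\alpha(K_{\mathbf A})$ is a copy of $\Delta_\alpha$ of diameter $\sim M^{-n}$, and lying in the macroscopic ``$\rho$-interior'' of $\Delta_\alpha$ says nothing about where $x$ sits relative to such a microscopic cell; $x$ can lie right on $\partial\,\Pi_\alpha(K_{\mathbf A})$, in which case $\psi_{\alpha,\mathbf A}(x)\in B_\eta(W_0)$ and that term of $F_\alpha^n g_1(x)$ is lost. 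Moreover, the number of level-$n$ cells whose projection contains $x$ is not bounded --- it is of order $M^{n(d-k)}$ (it has to grow, otherwise $F_\alpha^n f$ could not grow at all), and a fixed positive \emph{fraction} of those cells will have $x$ within $\eta M^{-n}$ of their boundary. So the ``boundary defect'' is not an additive error tending to $0$; it is a genuine multiplicative loss, and your step (3) does not close.

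The paper's proof is built precisely to quantify and survive that loss. It sorts the $(k-1)$-dimensional faces of the projected cells into equivalence classes according to their relative position in $\Pi_\alpha(K_{\mathbf A})$, and splits the classes into \emph{rational} ones (where the hyperplanes spanned by faces of distinct cells can coincide) and \emph{irrational} ones. For rational classes a translation-invariance/periodicity argument plus a Lebesgue-number covering argument shows that at least a proportion $\vartheta>0$ of the cells covering $x$ keep $x$ at distance $>\eta/M^n$ from all their rational faces (inequality \eqref{td5}); for irrational classes at most one cell per class can have $x$ near the corresponding face, giving a bounded additive loss $C_2$. This yields $F_\alpha^{n'}g_1(x)\ge \vartheta\,F_\alpha^{n'}f(x)-C_2 f(x)$, and one then chooses $n'$ so large that $\vartheta(1+\varepsilon)^{n'}-C_2>1+\varepsilon$: the multiplicative factor $\vartheta$ is beaten by iterating Condition $B$, not avoided. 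None of this machinery appears in your proposal, and it is exactly the content the lemma exists to supply (your one-dimensional intuition that the defect is ``squeezed toward $\partial\Delta_\alpha$'' is what fails for $k>1$). A minor additional slip: you set $I_2^\alpha=\rho\Delta_\alpha\subset I_1^\alpha=\rho'\Delta_\alpha$, which reverses the inclusion $I_1^\alpha\subset int\{I_2^\alpha\}$ required by property $(i)$ of Condition $A$.
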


\begin{proof}[Proof of Proposition \ref{td9}]
Using Lemma \ref{td1}, Condition $A$ holds with the smallest multiple $r$ of $n$ satisfying
\[
  \left( 1+\varepsilon \right)^{r/n}\geq 2\frac{\max_{x\in I_1}{g_1(x)}}{\min_{x\in I_2}{g_2(x)}}.
\]
\end{proof}

  \begin{figure}[ht!]
    \begin{center}
      \subfigure[]{\includegraphics[width=0.35\textwidth]{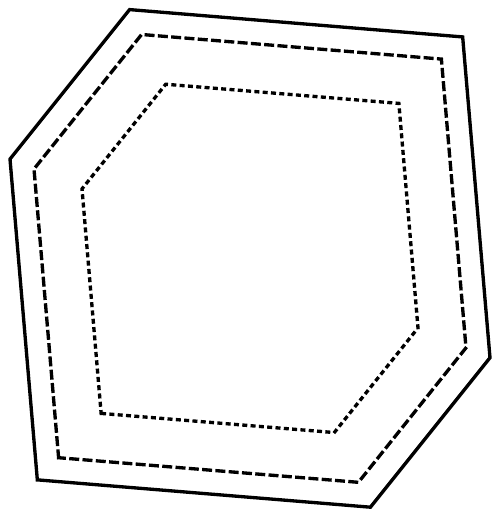}\label{tdfig1}}
    \hspace{2cm}
      \subfigure[]{\includegraphics[width=0.35\textwidth]{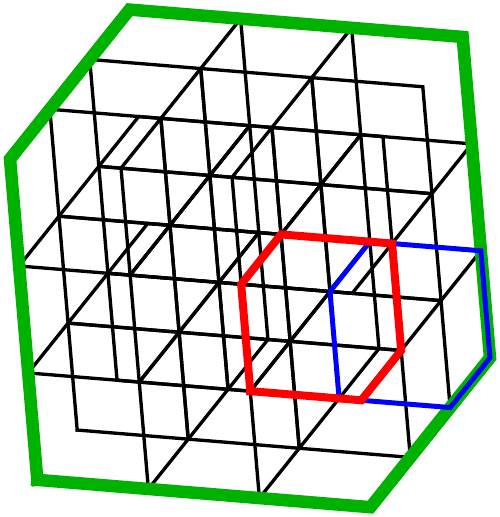}\label{tdfig3}}
    \caption{We project from $d=3$ to $k=2$ dimension. \textbf{(a)} $\Delta_\alpha$(continuous), $I_2^{\alpha}$(dashed) and $I_1^{\alpha}$(dotted). \textbf{(b)} $W_n$ with $n=1$, $M=3$. The big bold faced green contour is $W_0$. The blue and the red small contours correspond to cubes $K_{[3,1,1]^T}$ and $K_{[3,3,2]^T}$, respectively. As their coordinates show, these cubes are separated from each other, however after projection their top and bottom faces intersect. Hence we have at least two \emph{rational} classes (see the definition in the proof of Lemma \ref{td1}): the sides on the top and on the bottom.}
    \end{center}
  \end{figure}

To prove Lemma \ref{td1} we need the following definitions. Recall that we defined $\Delta_\alpha=\Pi_\alpha(K)$, where $K=[0,1]^d$. Since by assumption $S_\alpha$ is not a coordinate plane, the $\Pi_\alpha$ image of the $k-1$-dimensional faces
of the boundary of $K$ are $k-1$-dimensional. Then the boundary $W_0$ of $\Delta_\alpha$ is the union of $k-1$-dimensional faces.
The set of these $k-1$-dimensional faces (whose union form $W_0$) is denoted by $\mathcal{F}_0$.
In the special case shown on
Figure \ref{tdfig3} the boldfaced (green) hexagon is $W_0$  and the collection of the six sides is $\mathcal{F}_0$.
Let $K'$ be a level-$n$ cube. We define $W_{K'}$ and $\mathcal{F}_{K'}$ analogously. Let $W_n$ and $\mathcal{F}_n$ be the union of $W_{K'}$
and $\mathcal{F}_{K'}$ for all level $n$ cubes $K'$.

\emph{Sketch of the proof of Lemma \ref{td1}}: Basically we would like to follow the idea of the proof of \cite[Lemma 8]{RS}, but since the projection onto the $k$-dimensional plane is geometrically more complicated, now we explain how the proof is carried out. The main difference is that while the proof of case $d=2$, $k=1$ uses the fact that the sets $W_0$ and $W_n\setminus W_0$ are separated, the same is not true if $k>1$. Namely, the boldfaced (green) line on Figure \ref{tdfig3} is not separated from the union of the black (not boldfaced) lines.
This difficulty is handled by dividing  the sides of $W_0$ to \emph{rational} and \emph{irrational classes}.
For some $\alpha$ it is possible that there exist both rational and irrational sides. For rational classes some kind of periodicity occurs, while for irrational classes there is a separation similar to that of case $k=1$ and therefore we can use the continuity of function $f$.

\begin{proof}[Proof of Lemma \ref{td1}]
We order the faces of $\bigcup_{n=0}^\infty \mathcal{F}_n$ into equivalence classes in the following way.
 Every equivalence class can be identified with a face of $\mathcal{F}_0$. That is every face  $f_0\in\mathcal{F}_0$
determines an equivalence class.
 Let $f'\in \mathcal{F}_n$ for an $n \geq 1$. Then there exists a unique  level $n$ cube  $K'$ such that $f'$ is a $k-1$-dimensional face of $\Pi_\alpha (K')$.
 Let $f_0\in\mathcal{F}_0$ be the corresponding  $k-1$-dimensional face of $\Pi_\alpha (K)$. That is the relative position of $f'$ to $\Pi_\alpha(K')$ is the same as the relative position of $f_0$ to $\Pi_\alpha(K)$. Then $f'$ is equivalent to $f_0$.

Note that since most of the $k-1$-dimensional faces may belong to many different cubes, hence some different faces of the same level geometrically coincide, but it not causes any inconvenience.

Throughout this proof we use more tractable indices to denote the level $n$ cubes. For $n\geq 1$ let
\[
  \mathcal{B}_n=\{0,\dots,M^n-1\}^d.
\]
For $\mathbf i=(i_1,\dots,i_d) \in \mathcal{B}_n$ let
\[
  K_{\mathbf i}=M^{-n}[i_1,i_1+1]\times\dots\times[i_d,i_d+1].
\]

In this proof two types of classes are handled separately which are called \emph{rational} and \emph{irrational}. To define these consider the level $n$ cube $K_{\mathbf i}$. Let $f$ be one of the $k-1$-dimensional faces of $\Pi_\alpha(K_{\mathbf i})$. Its equivalence class is denoted by $\hat{f}_0$, where $f_0\in \mathcal{F}_0$. We write $Q_{\mathbf i}^{\hat{f}_0}$ for the $k-1$ dimensional plane which is spanned by $f$ (which is uniquely determined by $\mathbf i$ and $\hat{f}_0$). We say that the class $\hat{f}_0$ is \emph{rational} if there exists $n\geq1$ and $\mathbf{i},\mathbf{j}\in \mathcal{B}_n$ such that
\[
  Q^{\hat{f}_0}_{\mathbf i}=Q^{\hat{f}_0}_{\mathbf j} \text{\ \ , but\ \ \ } \mathbf i \neq \mathbf j.
\]
The other classes are called \emph{irrational}. An example of rational classes is shown on Figure \ref{tdfig3} in the case $d=3$, $k=2$ and $M=3$: for $n=1$ there exist two different cubes, the red and the blue ones, such that after projection the line of their top (and bottom) faces coincide.

First we discuss rational classes. We write $B_\rho(\mathbf x)$ for the ball centered at $\mathbf{x}\in\mathbb{R}^d$ with radius $\rho$.
We show that there exists $\vartheta_1,\eta_1> 0$ such that for all $n\geq 1$, for all $\eta_1>\eta>0$ and for all $\mathbf x\in \Delta_\alpha\setminus B_{\eta/M}(W_0)$
\be \label{td5} \ba
  \# \Big \{\mathbf i \in \mathcal{B}_n \ \Big | \ & \mathbf x\in \Pi_\alpha\left(K_{\mathbf i}\right)\ \&\ dist\left( \mathbf x\ ;\ Q^{\hat{f}_0}_{\mathbf i}\right)>\eta/M^n \text{ for all rational classes $\hat{f}_0$} \Big \}\geq \\
  &\geq \vartheta_1\cdot \# \Big \{\mathbf i \in \mathcal{B}_n \ \Big | \ \mathbf x\in \Pi_\alpha\left(K_{\mathbf i}\right) \Big \},
\ea \ee
where $dist(\ .\ ;\ .\ )$ is the usual distance in $\mathbb{R}^d$.
Now we verify the following Fact which asserts a kind of translation invariance.
\begin{fact}
  There exist $\hat{\mathbf j}\in \mathbb{Z}^d$ such that for all rational class $\hat{f}_0$, for all level $n$, for all ${\mathbf i}\in\mathcal{B}_n$ and for all $k\in \mathbb{Z}$ satisfying $\mathbf i+k\ \hat{\mathbf j} \in \mathcal{B}_n$ we have
\be \label{td13}
  dist\left( \mathbf x\ ;\ Q^{\hat{f}_0}_{\mathbf i}\right)=dist\left( \mathbf x\ ;\ Q^{\hat{f}_0}_{\mathbf i+k\:\hat{\mathbf j}}\right).
\ee
\end{fact}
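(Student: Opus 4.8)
The plan is to convert the geometric assertion \eqref{td13} into a single linear condition on $\hat{\mathbf j}$, and then to produce one integer vector meeting that condition for every rational class simultaneously.

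First I would describe the planes $Q^{\hat f_0}_{\mathbf i}$ uniformly. Fix a class $\hat f_0$ and let $R=R(\hat f_0)\subset[d]$, $|R|=k-1$, be the coordinates left free by the corresponding $(k-1)$-face of $K$, so that every face of this class is parallel to the $(k-1)$-dimensional direction space $H_{\hat f_0}:=\Pi_\alpha\big(\mathrm{span}\{\mathbf e^{(r)}:r\in R\}\big)\subset S_{\mathcal I'}$. Write $V_\alpha:=\mathrm{span}(\gamma_\alpha)=\ker\Pi_\alpha$, and let $\mathbf u=\mathbf u_{\hat f_0}$ be a unit normal to $H_{\hat f_0}$ inside $S_{\mathcal I'}$. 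Since the face-plane of the level-$n$ cube $K_{\mathbf i}$ is a fixed representative translated by the $\Pi_\alpha$-image of $M^{-n}\mathbf i$, and all these planes are parallel,
\[ dist\big(\mathbf x\,;\,Q^{\hat f_0}_{\mathbf i}\big)=\Big|\,\langle \mathbf x,\mathbf u\rangle - c_{\hat f_0}-M^{-n}\,\langle \Pi_\alpha(\mathbf i),\mathbf u\rangle\,\Big| \]
for a constant $c_{\hat f_0}$ independent of $\mathbf i$ and $n$. Replacing $\mathbf i$ by $\mathbf i+k\hat{\mathbf j}$ shifts the bracket by $-kM^{-n}\langle\Pi_\alpha(\hat{\mathbf j}),\mathbf u\rangle$; as for large $n$ at least two values of $k$ are admissible, this increment must vanish. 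Hence \eqref{td13} holds for all $n,\mathbf i,k,\mathbf x$ exactly when $\langle\Pi_\alpha(\hat{\mathbf j}),\mathbf u_{\hat f_0}\rangle=0$, i.e.
\[ \Pi_\alpha(\hat{\mathbf j})\in H_{\hat f_0},\qquad\text{equivalently}\qquad \hat{\mathbf j}\in U_{\hat f_0}:=V_\alpha+\mathrm{span}\{\mathbf e^{(r)}:r\in R(\hat f_0)\}. \]

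Second I would recast rationality in the same language. A collision $Q^{\hat f_0}_{\mathbf i}=Q^{\hat f_0}_{\mathbf j}$ with $\mathbf i\neq\mathbf j$ means precisely $\Pi_\alpha(\mathbf i-\mathbf j)\in H_{\hat f_0}$, i.e. $\mathbf i-\mathbf j\in U_{\hat f_0}\cap\mathbb Z^d$ with a nonzero component outside $R(\hat f_0)$. Thus a class is rational exactly when $U_{\hat f_0}$ contains an integer vector not supported on the free subspace $\mathrm{span}\{\mathbf e^{(r)}:r\in R\}$; each rational class therefore comes equipped with an explicit lattice vector of $U_{\hat f_0}$. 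The construction of $\hat{\mathbf j}$ now reduces to finding a single nonzero integer vector in $\bigcap_{\hat f_0\text{ rational}}U_{\hat f_0}=\Pi_\alpha^{-1}\big(\bigcap_{\hat f_0\text{ rational}}H_{\hat f_0}\big)$, a subspace of dimension at least $\dim V_\alpha=d-k\geq1$. I would split into two regimes. If $V_\alpha$ contains a nonzero rational (hence integer) vector, take $\hat{\mathbf j}$ to be a primitive one: then $\Pi_\alpha(\hat{\mathbf j})=\mathbf 0\in H_{\hat f_0}$ for every class, the entire shadow $\Pi_\alpha(K_{\mathbf i})$ is unchanged under $\mathbf i\mapsto\mathbf i+k\hat{\mathbf j}$, and \eqref{td13} holds at once for all classes. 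If $V_\alpha$ is totally irrational, I feed in the vectors supplied by rationality: for a lone rational class the collision vector $\mathbf i-\mathbf j\in U_{\hat f_0}\cap\mathbb Z^d$ already serves as $\hat{\mathbf j}$, and for several rational classes I would intersect the hyperplanes $U_{\hat f_0}$.

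The hard part is exactly this coordination step when the kernel is irrational and two or more rational classes have transverse direction spaces $H_{\hat f_0}$: one must merge the class-dependent collision vectors into a single common lattice vector. This is the genuinely higher-dimensional difficulty absent from the planar case $k=1$ of \cite{RS}, where there is only one class and \eqref{td13} is immediate. The key claim to establish is that the integer relations witnessing rationality of the different classes are not independent but are all relations inside the single kernel $V_\alpha$, and hence force $\bigcap_{\hat f_0\text{ rational}}U_{\hat f_0}=\Pi_\alpha^{-1}\big(\bigcap H_{\hat f_0}\big)$ to be defined over $\mathbb Q$; being a rational subspace of positive dimension, it then contains a nonzero integer vector, which we take as $\hat{\mathbf j}$. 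I expect the bulk of the work to lie in verifying this rationality of the intersection, using that every constraint factors through the common oblique projection $\Pi_\alpha$.
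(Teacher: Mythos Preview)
Your approach is the same as the paper's but far more explicit. The paper's proof does only the single-class step: it fixes a rational class $\hat f_0$, picks a collision $Q^{\hat f_0}_{\hat{\mathbf i}}=Q^{\hat f_0}_{\tilde{\mathbf i}}$ with $\hat{\mathbf i}\ne\tilde{\mathbf i}$ in some $\mathcal B_m$, and observes that $\tilde{\mathbf i}-\hat{\mathbf i}$ gives $Q^{\hat f_0}_{\mathbf i}=Q^{\hat f_0}_{\mathbf i+k(\tilde{\mathbf i}-\hat{\mathbf i})}$ at every level $n$. For the passage to a single $\hat{\mathbf j}$ valid for \emph{all} rational classes simultaneously---precisely the coordination step you isolate as ``the hard part''---the paper writes only ``It is easy to see that the same is true while considering all rational classes at once'' and offers no further argument. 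So you have not missed any idea present in the paper; your reformulation via $U_{\hat f_0}=V_\alpha+\mathrm{span}\{\mathbf e^{(r)}:r\in R(\hat f_0)\}$ and the case split on whether $V_\alpha\cap\mathbb Z^d\ne\{0\}$ already go well beyond what the paper supplies, and the gap you leave open is exactly the one the paper dismisses in a sentence.
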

\begin{proof}[Proof of the Fact]
Fix a rational class $\hat{f}_0$. Suppose that $Q^{\hat{f}_0}_{\hat{\mathbf i}}=Q^{\hat{f}_0}_{\widetilde{\mathbf i}}$ for some $\hat{\mathbf i}, \widetilde{\mathbf i} \in \mathcal{B}_m$, $m\geq 1$. Then for any $n\geq 1$ and for any $\mathbf i \in \mathcal{B}_n$
\[ Q^{\hat{f}_0}_{\mathbf i}=Q^{\hat{f}_0}_{\mathbf i+k(\widetilde{\mathbf i}-\hat{\mathbf i})}
\]
for any $k\in\mathbb{Z}$ as long as $\mathbf i+k(\widetilde{\mathbf i}-\hat{\mathbf i})\in \mathcal{B}_n$. Hence for all $\mathbf x\in \Delta_\alpha$ and for any indices $\mathbf i$
\[
  dist\left( \mathbf x\ ;\ Q^{\hat{f}_0}_{\mathbf i}\right)=dist\left( \mathbf x\ ;\ Q^{\hat{f}_0}_{\mathbf i+k(\widetilde{\mathbf i}-\hat{\mathbf i})}\right)
\]
holds since the planes are the same. It is easy to see that the same is true while considering all rational classes at once.
\end{proof}

Now we continue  the proof of Lemma \ref{td1}.

  \begin{figure}[ht!]
    \begin{center}
      \subfigure[]{\includegraphics[width=0.35\textwidth]{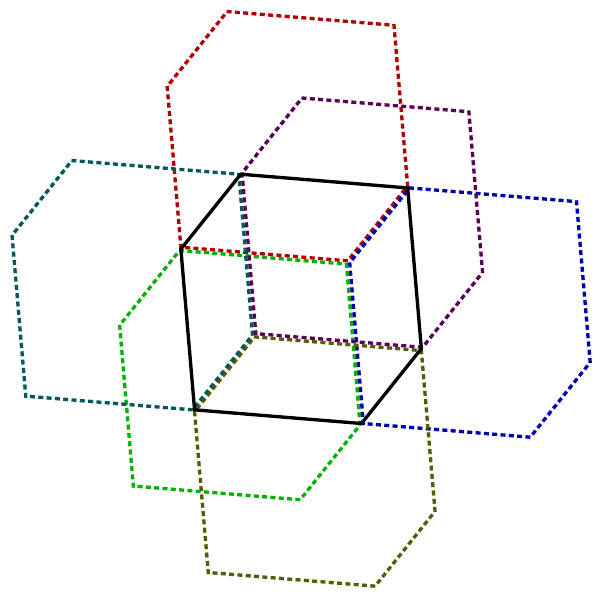}\label{tdfig2}}
    \hspace{2cm}
      \subfigure[]{\includegraphics[width=0.35\textwidth]{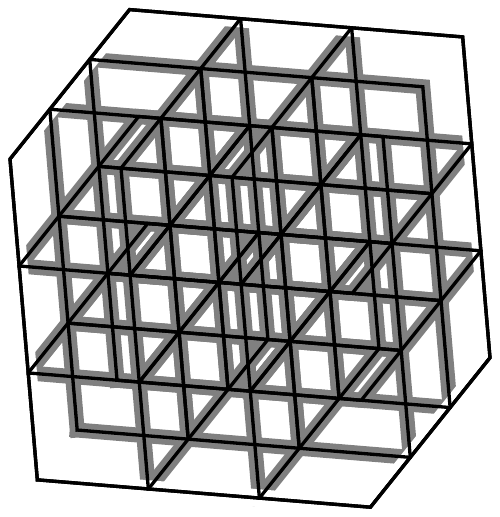}\label{tdfig4}}
    \caption{We project from $d=3$ to $k=2$ dimension. \textbf{(a)} The covering of $\Pi_\alpha(K_0)$(continuous) with the open sets $V_i$(dotted), $i=0,\dots,6$. \textbf{(b)} The gray contour of the inner border is $I_2\cap B_{\eta/M^{n'}}(W_{n'})$.}
    \end{center}
  \end{figure}

 In order to show that the set in the first line of \eqref{td5} is nonempty we prove the following Fact:
\begin{fact}
  There exist a constant $\eta_1>0$ such that the following holds:

  Let $\mathbf x\in \Delta_\alpha\setminus B_{\eta_1/M}(W_0)$ and $n\geq 1$.
 Let $K_{\mathbf{i}}$ be an arbitrary level-$n$ cube such that $\mathbf x\in \Pi_\alpha(K_\mathbf{i})$. Then at least one of the neighbors $K_{\mathbf{i}'}$ of $K_{\mathbf{i}}$ satisfies that
 $$
 B_{\eta_1}(\mathbf{x})\subset \Pi_\alpha(K_{\mathbf{i}'}),
 $$
 where the level-$n$ cubes $K_\mathbf{i}$ and $K_{\mathbf{i}}'$
 are neighbors if they share a common $d-1$-dimensional face.

\end{fact}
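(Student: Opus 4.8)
The plan is to exploit the fact that $\Pi_\alpha$ restricted to the unit cube $K$ is a Lipschitz map whose image $\Delta_\alpha = \Pi_\alpha(K)$ is a full-dimensional polytope in $S_{\mathcal{I}'}\cong\mathbb{R}^k$, and that $\Pi_\alpha(K_{\mathbf i})$ is a scaled-by-$M^{-n}$ translate of $\Delta_\alpha$ for every level-$n$ cube $K_{\mathbf i}$. Fix $\mathbf x\in\Delta_\alpha$ and a level-$n$ cube $K_{\mathbf i}$ with $\mathbf x\in\Pi_\alpha(K_{\mathbf i})$. The preimage $\Pi_\alpha^{-1}(\mathbf x)\cap K_{\mathbf i}$ is a $(d-k)$-dimensional affine slice of the cube $K_{\mathbf i}$; pick any point $\mathbf y$ in it. The key geometric observation is that, since $S_\alpha$ is \emph{not} a coordinate plane, the fibre direction $\gamma_\alpha$ is not parallel to any coordinate hyperplane, and more importantly the projections of the $2d$ neighbours of $K_{\mathbf i}$ together with $\Pi_\alpha(K_{\mathbf i})$ itself cover a full neighbourhood of $\Pi_\alpha(K_{\mathbf i})$ inside $\Delta_\alpha$ in a uniform way. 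Concretely: the $3^d$-block of level-$n$ cubes consisting of $K_{\mathbf i}$ and all cubes sharing at least a vertex with it has $\Pi_\alpha$-image containing $\Pi_\alpha(K_{\mathbf i})$ together with a fixed-proportion collar around it (the proportion is $M^{-n}$ times an absolute constant $c_0=c_0(\alpha)$ coming from the geometry of $\Delta_\alpha$), as long as we stay inside $\Delta_\alpha$.

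The main steps, in order, are: (1) Reduce to level $n=1$ by the self-similar scaling $\Pi_\alpha\circ\varphi_{\mathbf A} = M^{-n}\Pi_\alpha + \text{const}$, so that it suffices to produce an absolute constant $c_0>0$ such that for every $\mathbf x\in\Delta_\alpha\setminus B_{c_0}(W_0)$, some face-neighbour $K_{\mathbf i'}$ of $K_0=[0,1]^d$ has $B_{c_0}(\mathbf x)\subset\Pi_\alpha(K_{\mathbf i'})$; rescaling then gives $\eta_1 = c_0 M^{-?}$ with the bound $B_{\eta_1}(\mathbf x)\subset\Pi_\alpha(K_{\mathbf i'})$ at every level (here one must be slightly careful that ``neighbour of $K_{\mathbf i}$'' at level $n$ corresponds under $\varphi_{\mathbf A}$ to a neighbour of a level-$1$ subcube, so a compactness argument over the finitely many relative positions is cleaner than an explicit formula). (2) For the level-$1$ statement, use a finite open cover of $\Delta_\alpha$: for each face $f_0\in\mathcal F_0$ choose the coordinate direction $\pm\mathbf e^{(m)}$ whose push-forward under $\Pi_\alpha$ points ``outward'' across $f_0$ — such a direction exists for every boundary face precisely because $S_\alpha$ is not a coordinate plane, so no face of $\Delta_\alpha$ is ``flat'' with respect to all coordinate translations — and let $V_{f_0}$ be an open set on which translating $K_0$ by $M^{-n}$ times that coordinate vector moves its projection to cover a neighbourhood of points near $f_0$; add one more open set $V_{\mathrm{int}}\subset\mathrm{int}(\Delta_\alpha)$ covering the part far from $W_0$, on which $K_0$ itself already works. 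This is the cover suggested by Figure~\ref{tdfig2}. (3) By compactness of $\Delta_\alpha$ and the Lebesgue number lemma applied to this finite cover, extract a single $c_0>0$: every $\mathbf x\in\Delta_\alpha$ with $\mathrm{dist}(\mathbf x,W_0)\geq c_0$ lies in some $V_{f_0}$ with $B_{c_0}(\mathbf x)\subset V_{f_0}$, and on $V_{f_0}$ the chosen neighbour's projection contains $B_{c_0}(\mathbf x)$ — possibly after shrinking $c_0$ once more so that the ``collar'' produced by the neighbour is at least $c_0$ wide.

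The main obstacle is step (2): showing that for \emph{every} boundary face $f_0$ of $\Delta_\alpha$ there is a coordinate direction $\mathbf e^{(m)}$ whose image $\Pi_\alpha(\mathbf e^{(m)})$ has a strictly positive component in the outward-normal direction of $f_0$ within $S_{\mathcal I'}$, uniformly in $\alpha\in A_{\mathcal I'}$. Equivalently, no facet of $\Delta_\alpha$ can be simultaneously ``parallel'' to all $d$ images $\Pi_\alpha(\mathbf e^{(1)}),\dots,\Pi_\alpha(\mathbf e^{(d)})$ — but those images span $S_{\mathcal I'}$, so this is automatic for a fixed $\alpha$; the content is the \emph{uniformity}, which one gets because $A_{\mathcal I'}$ as defined in \eqref{td49} is a relatively compact condition keeping $|\det \matr C_2(\mathcal I')|$ bounded below, so $\matr C_1\matr C_2^{-1}$ in \eqref{td89} stays bounded and the whole construction (vertices of $\Delta_\alpha$, outward normals, the relevant positive components) varies continuously and stays bounded away from degeneracy. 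I would therefore phrase step (2) as: the finitely many data (faces of $\Delta_\alpha$, their normals, the ``good'' coordinate direction and its outward component) depend continuously on $\alpha$ over the closure of $A_{\mathcal I'}$ and are nondegenerate there, hence admit uniform positive lower bounds, which feeds the Lebesgue-number argument in step (3).
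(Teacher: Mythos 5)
Your proposal follows essentially the same route as the paper's proof: cover $\Pi_\alpha(K_{\mathbf i})$ by the open shadows of the cube and of its $2d$ face-neighbours (which works precisely because $S_\alpha$ is not a coordinate plane), take $\eta_1$ from the Lebesgue number of this finite open cover, and use self-similarity to make the constant proportional to $M^{-n}$. The extra discussion of uniformity in $\alpha$ over $A_{\mathcal I'}$ is not required for the Fact as stated (where $\alpha$ is fixed), but it is harmless and your elaboration of why the neighbours' shadows cover each boundary facet simply fills in what the paper dismisses as ``easy to see.''
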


\begin{proof}[Proof of the Fact]
For notational simplicity we write $K_0:=K_\mathbf{i}$ and we denote the neighbors of $K_0$ by $K_1,\dots,K_{2d}$.
Further, we denote by $V_i$, $i=0,\dots,2d$ the open shadows of these cubes, i.e.
\[
  V_i=int\ \Pi_\alpha(K_i).
\]
Using that $S_\alpha$ is not a coordinate plane it is easy to see that the union of the sets $V_i$ cover $\Pi_\alpha(K_0)$. Figure \ref{tdfig2} shows $\Pi_\alpha(K_0)$ and the covering for $d=3$ and $k=2$.
This is so, because by the assumption b

 Hence we can choose $\eta_1$ to be smaller than $M^n$ times the Lebesgue number of the finite open covering $\{V_i\}_{i=0}^{2d}$, which is always positive \cite[Theorem 0.20]{W}, and by similarity proportional to $M^{-n}$. For the definition of the Lebesgue number of a covering, see \cite{W}. Note that if one of the cubes $K_i$, $i=1,\dots,2d$ is out of the unit cube, then we don't need that cube for the cover, since our goal is to cover $K_0\setminus B_{\eta_1/M}(W_0)$. Another note is that the same argument remains valid for any $\eta$ such that $\eta_1>\eta>0$ holds.
\end{proof}

Now we are able to handle rational classes. By the definition of $\eta_1$ for all $\mathbf x\in \Delta_\alpha\setminus B_{\eta_1/M}(W_0)$ and for all $n\geq 1$ there exists $\mathbf i \in \mathcal{B}_n$ such that
\[
  B_{\eta_1/M^n}(\mathbf x)\subseteq \Pi_\alpha(K_{\mathbf i}).
\]
In addition, using the periodicity described in \eqref{td13}, cubes with such property follow each other periodically with the period independent of $\mathbf x$ and independent of $n$. Hence we conclude that the assertion in \eqref{td5} holds: There exists $\vartheta>0$ such that for any $\eta_1>\eta>0$, for any level $n\geq 1$ and for all $\mathbf x\in \Delta_\alpha\setminus B_{\eta/M}(W_0)$
\[ \ba
  \# \Big \{\mathbf i \in \mathcal{B}_n \ \Big | \ & \mathbf x\in \Pi_\alpha\left(K_{\mathbf i}\right)\ \&\ dist\left( \mathbf x\ ;\ Q^{\hat{f}_0}_{\mathbf i}\right)>\eta/M^n \ \forall \text{ rational classes $\hat{f}_0$} \Big \}\geq \\
  &\geq \vartheta \# \Big \{\mathbf i \in \mathcal{B}_n \ \Big | \ \mathbf x\in \Pi_\alpha\left(K_{\mathbf i}\right) \Big \}.
\ea \]
Note that $\vartheta$ does not depend on $\eta_1$, only the length of a period has effect on it. This equation will be sufficient to handle rational classes.

To handle irrational classes as well let us denote by $C_2$ the number of these classes. By the assumption of the lemma, for $\alpha$ fixed Condition $B(\alpha)$ holds for some function $f$ and $\varepsilon>0$. It is clear that there exists an integer $n'\geq 1$ such that
\be \label{td4}
  \vartheta(1+\varepsilon)^{n'}-C_2 > 1+\varepsilon,
\ee
where $\varepsilon$ was defined in Condition $B(\alpha)$. Choose $\eta_2>0$ to be smaller than the half of the smallest distance between any two level $n'$ faces falling in the same irrational class and to have
\be \label{td11}
  \inf_{\mathbf x\in \Delta_\alpha\setminus B_{\eta_2/M}(W_0)}{f(\mathbf x)}\geq \sup_{\mathbf x\in B_{\eta_2/M^{n'}}(W_0)}{f(\mathbf x)},
\ee
which can be easily achieved by the continuity of $f$. Setting $\eta=\min\{\eta_1,\eta_2\}$, we define the sets wanted in Condition A by
\[
  I_1:=\Delta_{\alpha}\setminus B_{\eta}(W_0)\text{ and by }I_2:=\Delta_{\alpha}\setminus B_{\eta/M}(W_0).
\]

When we verify that the assertion of the lemma holds we distinguish two cases: first we show it for those $x\in I_2$ which are separated from the inner borders $W_{n'}$, and then for those which are "close" to them, see Figure \ref{tdfig4}. The first case is obvious. However,
 in the second case we have to handle the following difficulty:  When $\mathbf x$ is close to $W_0$, then $f(\mathbf x)$ is close to zero. However, for fixed $\mathbf x\in I_2$, both in case of rational and irrational faces, we have bounds on the number of level $n'$ small cubes such that $\mathbf x$ is close to a face of that cube. Namely,
\begin{itemize}
	\item In the first case when  $\mathbf x\in I_2\setminus B_{\eta/M^{n'}}(W_{n'})$, we  use the definition of $F_{\alpha}$ and \eqref{td2} to obtain
\[
  F^{n'}_{\alpha}g_1(\mathbf x)=F^{n'}_{\alpha}f(\mathbf x)\geq (1+\varepsilon)^{n'} f(\mathbf x)>\left( 1+\varepsilon\right)g_2(\mathbf x).
\]

  \item In the second case $\mathbf x\in I_2\cap B_{\eta/M^{n'}}(W_{n'})$. This is the place where we use all the former preparations. Putting together the definition of $\eta$ and \eqref{td5} we obtain   that from one irrational class at most one cube can cover $\mathbf x$ close to its boundary we get
\[ \ba
  \# \Big \{\mathbf i \in \mathcal{B}_{n'} \ \Big | \ & \mathbf x\in \Pi_\alpha\left(\varphi_{\mathbf i}\left(B_{\eta}(W_0)\right)\right) \Big \}\leq \\
  &\leq (1-\vartheta) \# \Big  \{ \mathbf i \in \mathcal{B}_{n'} \ \Big | \ \mathbf x\in \Pi_{\alpha}\left(K_{\mathbf i}\right) \Big \} +C_2.
\ea \]
Putting this together with \eqref{td11} and with the definition of $F_{\alpha}$ we obtain
\be \ba \label{td3}
  F^{n'}_{\alpha}g_1(\mathbf x)&\geq F^{n'}_{\alpha}f(\mathbf x)-(1-\vartheta)F^{n'}_{\alpha}f(\mathbf x)-C_2f(\mathbf x) \\
  &= \vartheta F^{n'}_{\alpha}f(\mathbf x)-C_2f(\mathbf x)\ \forall \mathbf x\in\Delta_{\alpha}.
\ea \ee
Then by \eqref{td3}, \eqref{td2} and \eqref{td4} we have
\[
  F^{n'}_{\alpha}g_1(\mathbf x)\geq \left(1+\varepsilon \right)f(\mathbf x)+\left(\vartheta( 1+\varepsilon)^{n'}-1-\varepsilon-C_2\right)f(\mathbf x)\geq \left( 1+\varepsilon \right)g_2(\mathbf x).
\]
\end{itemize}
\end{proof}

\subsection{Examples}\label{td44}

In this section we show three examples when Condition B($\alpha$) can be checked.
\begin{enumerate}
	\item [Ex. 1]
The case of equal probabilities can be handled as in \cite{RS}: Suppose that $p_{\mathbf A}=p>1/M^{d-k}$ for all $\mathbf A\in \mathcal{A}_1$. Then let us define the function $f:\Delta_\alpha\to \mathbb{R}^+$ that we can use in Condition $B(\alpha)$ by
\be \label{td99}
  f(\mathbf x)=\left|\psi^{-1}_{\alpha}(\mathbf x) \cap K\right|,
\ee
where $\left|.\right|$ denotes the $d-k$-dimensional Lebesgue measure. Clearly, $f$ vanishes continuously on the borders of $\Delta_\alpha$, and strictly positive inside. In addition, it is obvious that
\[
  F_{\alpha}f= M^{d-k}p\cdot f.
\]
Therefore the requirements of Condition $B(\alpha)$ are satisfied for all $\alpha$ and hence we can apply Theorem \ref{td20}. Moreover, the above example is sharp, because for $p< 1/M^{d-k}$ we have $dim_H E< k$.

\item [Ex. 2]

We give another example, when the function defined  in \eqref{td99} satisfies Condition $B(\alpha)$. We divide the $3$-dimensional unit cube into 27 congruent cubes of sides  $1/3$. Then we remove the small cube in the very center of the unit cube with some probability $p$, and the remaining 26 small cubes are retained with probability $q$. Finally, we project it to planes. Namely, let $d=3$, $M=3$ and $k=2$. Let
\[
  p_{(i,j,k)^T}=\begin{cases} p\text{, if $i=j=k=1$,}\\
                          q\text{ otherwise.}\end{cases}
\]

If we project it orthogonally to a coordinate-plane, then by Theorem \ref{td88} we know that if $p+2q<1$ or $3q<1$, then almost surely there is no interval in the projected fractal. On the other hand, if both $p+2q>1$ and $3q>1$ hold, then it is easy to see that the function $f$ in \eqref{td99} satisfies Condition B($\alpha$) for all $\alpha$, and hence almost surely we have an interval in the projected fractal in all directions, conditioned on $E\neq \emptyset$.

\end{enumerate}

\subsection{Condition A implies nonempty interior}\label{td43}

\subsubsection{Robustness}

To handle all directions at once we show that the robustness property described in \cite[Section 4.3]{RS} holds in the higher dimensional case as well. Suppose that condition $A(\alpha)$ holds for some $\alpha=\left\{\mathbf{a}^{(1)},\dots,\mathbf{a}^{(k)}\right\}$ with $I_1^\alpha$, $I_2^\alpha$ and $r=r_\alpha$. Let $\delta$ be the Hausdorff distance between $I_1^\alpha$ and $I_2^\alpha$ and let $I_1'$ be the $\delta/2$ neighborhood of $I_1^\alpha$. In what follows   we show that condition $A(\alpha)$ also holds in some neighborhood of $\alpha$. First we define an equivalence relation on the set of directions as follows: for an $\widetilde{\alpha}=\left\{\widetilde {\mathbf{a}}^{(1)},\dots,\widetilde{\mathbf{a}}^{(k)}\right\} \in \mathbb{R}^{d\times k}$ we have
\[
  \alpha\sim \widetilde{\alpha} \Leftrightarrow S_\alpha=S_{\widetilde{\alpha}},
\]
where the plane $S_\alpha$ was defined in \eqref{td87}. Then we define a distance between $\alpha$ and $\beta=\left\{\mathbf{b}^{(1)},\dots,\mathbf{b}^{(k)}\right\}$ by
\be \label{td57}
  d(\alpha,\beta)=\min_{\substack{\widetilde{\alpha}\sim \alpha\\ \widetilde{\beta}\sim \beta}}\max_{i=1,\dots,k}\left\| \widetilde {\mathbf{a}}^{(i)}-\widetilde {\mathbf{b}}^{(i)} \right\|_2.
\ee

\begin{prop}{Robustness.}\label{td23}
Fix $\alpha$, $I_1^\alpha$, $I_2^\alpha$, $\delta$ and $I_1'$ as before. Then there exists a constant $0<C_R<\infty$ such that for all $\varepsilon>0$ if $d(\alpha,\beta)<C_R \cdot\varepsilon$, then
\[
  \left\| \Pi_\alpha \mathbf{x}-\Pi_\beta \mathbf{x}\right\|_2 <  \varepsilon \quad\ \text{for all\ } \mathbf x\in K.
\]
\end{prop}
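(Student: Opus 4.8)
The plan is to prove Proposition \ref{td23} by showing that the map $\alpha \mapsto \Pi_\alpha$ is Lipschitz on $K$ in a suitable sense, which amounts to unwinding the explicit formula \eqref{td89}, namely $\Pi_\alpha(\mathbf{x}) = \mathbf{x}_1 - \matr{C}_1^\alpha\,(\matr{C}_2^\alpha)^{-1}\mathbf{x}_2$. Since $\Pi_\alpha$ depends on $\alpha$ only through the matrix $\matr{C}^\alpha$, and since $\matr{C}_1^\alpha$, $\matr{C}_2^\alpha$ are submatrices of $\matr{C}^\alpha$ (whose columns are an orthonormal basis $\gamma_\alpha$ of $S_\alpha^\perp$), the whole dependence factors through $S_\alpha^\perp$, equivalently through $S_\alpha$. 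This is why the distance $d(\alpha,\beta)$ in \eqref{td57} is defined by first quotienting out the equivalence relation $\sim$: the quantity $\left\|\Pi_\alpha\mathbf{x} - \Pi_\beta\mathbf{x}\right\|$ only sees the equivalence classes, so it is legitimate to pick representatives $\widetilde\alpha$, $\widetilde\beta$ realizing the minimum in \eqref{td57} and work with those.

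First I would fix the representatives achieving the minimum in \eqref{td57} and replace $\alpha,\beta$ by them without changing $\Pi_\alpha,\Pi_\beta$; write $\delta_0 := d(\alpha,\beta) = \max_i\|\mathbf{a}^{(i)} - \mathbf{b}^{(i)}\|_2$. Next I would record that the orthonormal basis $\gamma_\alpha$ of $S_\alpha^\perp$ (hence the matrix $\matr{C}^\alpha$) can be chosen to depend continuously — in fact locally Lipschitz — on $\alpha$ near the fixed $\alpha$: e.g.\ apply Gram–Schmidt to a fixed spanning set of $S_{\widetilde\alpha}^\perp$ obtained from $\widetilde\alpha$, which is a rational (hence smooth) operation wherever the relevant determinant stays away from zero. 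Since $\alpha \in A_{\mathcal{I}'}$ from \eqref{td49}, we have $|\det(\matr{C}_2^\alpha)| > \frac{1}{2\sqrt{\binom{d}{k}}} > 0$, so on a small neighborhood of $\alpha$ all quantities — the entries of $\matr{C}_1^\beta$, $\matr{C}_2^\beta$, the inverse $(\matr{C}_2^\beta)^{-1}$ (via Cramer's rule and the bounded-away-from-zero determinant) — are Lipschitz functions of $\beta$ with a uniform constant depending only on $d$, $k$, and the lower bound on $|\det \matr{C}_2|$. Then I would estimate
\[
  \bigl\|\Pi_\alpha\mathbf{x} - \Pi_\beta\mathbf{x}\bigr\|_2
  = \bigl\| (\matr{C}_1^\beta(\matr{C}_2^\beta)^{-1} - \matr{C}_1^\alpha(\matr{C}_2^\alpha)^{-1})\,\mathbf{x}_2 \bigr\|_2
  \le \bigl\| \matr{C}_1^\beta(\matr{C}_2^\beta)^{-1} - \matr{C}_1^\alpha(\matr{C}_2^\alpha)^{-1} \bigr\|_{\mathrm{op}} \cdot \|\mathbf{x}_2\|_2,
\]
and since $\mathbf{x} \in K = [0,1]^d$ gives $\|\mathbf{x}_2\|_2 \le \sqrt{d-k}$, together with the Lipschitz bound on the matrix-valued map this yields $\|\Pi_\alpha\mathbf{x} - \Pi_\beta\mathbf{x}\|_2 \le L\,\delta_0$ for a constant $L = L(d,k,\alpha)$. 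Setting $C_R := 1/L$ then gives exactly the claimed implication: $d(\alpha,\beta) < C_R\varepsilon$ forces $\|\Pi_\alpha\mathbf{x} - \Pi_\beta\mathbf{x}\|_2 < \varepsilon$ uniformly in $\mathbf{x}\in K$.

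The main obstacle is the continuity/Lipschitz dependence of the orthonormal basis $\gamma_\alpha$ — and hence of $\matr{C}^\alpha$ — on $\alpha$ modulo the equivalence $\sim$. One must be careful that $\gamma_\alpha$ is only defined up to an orthogonal change of basis of $S_\alpha^\perp$, and a bad (discontinuous) choice of this rotation would break Lipschitzness; the fix is that $\Pi_\alpha$ is invariant under such rotations (it is intrinsically the linear projection onto $S_{\mathcal{I}'}$ along $S_\alpha^\perp$, independent of which basis of $S_\alpha^\perp$ we picked), so one may freely choose the locally-Lipschitz Gram–Schmidt representative when estimating. A second, more routine point is keeping all the constants uniform: this is handled by staying in a fixed small neighborhood of $\alpha$ inside $A_{\mathcal{I}'}$, where $|\det \matr{C}_2^\beta|$ is bounded below, so the inverse and all derived quantities have controlled norms. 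None of this requires anything beyond elementary multilinear algebra and the explicit formula \eqref{td89}, so the proof is short once these dependencies are spelled out.
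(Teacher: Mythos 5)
Your proposal is correct and follows essentially the same route as the paper: both reduce to the explicit formula \eqref{td89}, bound $\left\|\Pi_\alpha\mathbf{x}-\Pi_\beta\mathbf{x}\right\|_2$ by the operator norm of $\matr{C}_1^\alpha(\matr{C}_2^\alpha)^{-1}-\matr{C}_1^\beta(\matr{C}_2^\beta)^{-1}$ times a dimensional constant, and control the inverse via the adjugate (Cramer) formula together with the uniform lower bound on $|\det \matr{C}_2|$ coming from \eqref{td49}. Your explicit observation that $\matr{C}_1(\matr{C}_2)^{-1}$ is invariant under orthogonal changes of the basis $\gamma_\alpha$ of $S_\alpha^\perp$, so that a locally Lipschitz representative may be chosen, addresses a point the paper leaves implicit but does not alter the argument.
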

\begin{proof}
Recall that $\Pi_\alpha \mathbf{x}$ was defined by formulas in \eqref{td89}. Thus
\be \label{td50}
  \Big\| \Pi_\alpha \mathbf{x}-\Pi_\beta \mathbf{x}\Big\|_2 \leq \left\|\matr {C}_1^\alpha \big(\matr {C}_2^\alpha\big)^{-1}-\matr {C}_1^\beta \big(\matr {C}_2^\beta\big)^{-1}\right\|\left\|\mathbf x\right\|_2,
\ee
where $\left\| .\right\|$ denotes the induced norm of $\left\| .\right\|_2$. The key observation in the following computations is that the determinants of $\matr {C}_2^\alpha$ and $\matr {C}_2^\beta$ are separated from zero.

Clearly $\left\|\mathbf x\right\|_2\leq \sqrt{d}$. To give a bound on the norm of the matrix on the right-hand side of \eqref{td50} we divide it into two parts:
\be \ba \label{td59}
  \left\|\matr {C}_1^\alpha \big(\matr {C}_2^\alpha\big)^{-1}-\matr {C}_1^\beta\big(\matr {C}_2^\beta\big)^{-1}\right\|&\leq
   \left\|\matr {C}_1^\alpha \big(\matr {C}_2^\alpha\big)^{-1}-\matr {C}_1^\alpha\big(\matr {C}_2^\beta\big)^{-1}\right\|\\
   &+\left\|\matr {C}_1^\alpha \big(\matr {C}_2^\beta\big)^{-1}-\matr {C}_1^\beta\big(\matr {C}_2^\beta\big)^{-1}\right\|.
\ea \ee
Regarding the first part of the right-hand side of \eqref{td59} we have
\be \ba \label{td58}
\bigg\| \big(\matr {C}_2^\beta\big)^{-1}&-\big(\matr {C}_2^\alpha\big)^{-1} \bigg\|
=\left\| \frac{adj(\matr {C}_2^\beta)}{det(\matr {C}_2^\beta)}-\frac{adj(\matr {C}_2^\alpha)}{det(\matr {C}_2^\alpha)}\right\|\\
&\leq \left\| \frac{adj(\matr {C}_2^\beta)}{det(\matr {C}_2^\beta)}-\frac{adj(\matr {C}_2^\alpha)}{det(\matr {C}_2^\beta)}\right\|
+ \left\| \frac{adj(\matr {C}_2^\alpha)}{det(\matr {C}_2^\beta)}-\frac{adj(\matr {C}_2^\alpha)}{det(\matr {C}_2^\alpha)}\right\|\\
&= \frac{1}{\left|det{\matr {C}_2^\beta}\right|}\left\| adj(\matr {C}_2^\alpha)-adj(\matr {C}_2^\beta)\right\|+\frac{\left\| adj(\matr {C}_2^\alpha)\right\|}{\left|det(\matr {C}_2^\alpha)det(\matr {C}_2^\beta)\right|}\left|det(\matr {C}_2^\alpha)-det(\matr {C}_2^\beta)\right|.
\ea \ee
It is easy to see that if $d(\alpha,\beta)<\varepsilon$, then the elements of the matrices $\matr {C}_2^\alpha$ and $\matr {C}_2^\beta$ differ from each other by at most constant times $\varepsilon$. Therefore, there exist constants $c_1,c_2$ such that
\[
\left|det{\matr {C}_2^\alpha}-det{\matr {C}_2^\beta}\right|<c_1\varepsilon \text{ and } \left\| adj(\matr {C}_2^\alpha)-adj(\matr {C}_2^\beta)\right\|<c_2\varepsilon.
\]
In addition, by \eqref{td49} the determinants are uniformly separated from zero, and the matrices $\matr {C}_2^\alpha$ and $\matr {C}_2^\beta$ have entries in $[0,1]$, which enables us to give a uniform (for all $\alpha$) upper bound for $\left\| adj(\matr {C}_2^\alpha)\right\|$. Hence if $d(\alpha,\beta)<C_R\cdot \varepsilon$ with $C_R$ small enough, then we have
\be \label{td100}
  \left\| \big(\matr {C}_2^\beta\big)^{-1}-\big(\matr {C}_2^\alpha\big)^{-1} \right\|\leq \frac{\varepsilon}{2\sqrt{d}}.
\ee

Similarly the second part of the right-hand side of \eqref{td59} is small as well:
\be \label{td101}
  \left\|\matr {C}_1^\alpha(\matr {C}_2^\beta)^{-1}-\matr {C}_1^\beta(\matr {C}_2^\beta)^{-1}\right\|\leq \left\|(\matr {C}_2^\beta)^{-1}\right\| \left\|\matr {C}_1^\alpha-\matr {C}_1^\beta\right\|\leq \frac{\varepsilon}{2\sqrt{d}},
\ee
if $C_R$ is small enough. Putting together \eqref{td50}, \eqref{td59}, \eqref{td100} and \eqref{td101} concludes the proof.
\end{proof}

\begin{cor}\label{td55}
Setting $\varepsilon=M^{-nr}\delta/2$ yields that if
\[
  d(\alpha,\beta) < C_R M^{-nr}\delta/4,
\]
then
\[
  \left\| \Pi_\alpha \mathbf{x}-\Pi_\beta \mathbf{x}\right\|_2 <  M^{-nr} \delta/4 \quad\ \text{for all\ } \mathbf x\in K,
\]
and hence for any $\mathbf A\in \mathcal{A}_{nr}$
\[
  \Pi_\beta\circ \varphi_{\mathbf A}(I_1^\beta)\subset \Pi_\beta \circ \varphi_{\mathbf A}(I_1').
\]
In particular the $n=1$ case implies that Condition $A$ holds for all directions in
\[ \ba
  J=B_{C_R M^{-r}\delta/2}(\alpha)\cap A_{S'}
\ea \]
with the same $I_1'$, $I_2^\alpha$ and $r$.
\end{cor}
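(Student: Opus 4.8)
The corollary is essentially a bookkeeping consequence of the Robustness Proposition~\ref{td23}: the plan is to instantiate that proposition, to push the resulting uniform bound through the homothety $\varphi_{\mathbf A}$, and then to compare the operators $F_\alpha^{\,r}$ and $F_\beta^{\,r}$ by monotonicity. First I would apply Proposition~\ref{td23} with $\varepsilon:=M^{-nr}\delta/4$; since the hypothesis $d(\alpha,\beta)<C_RM^{-nr}\delta/4=C_R\varepsilon$ is exactly the hypothesis of that proposition, it gives at once $\|\Pi_\alpha\mathbf x-\Pi_\beta\mathbf x\|_2<M^{-nr}\delta/4$ for every $\mathbf x\in K$, which is the first displayed conclusion. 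I will use this bound twice below: applied to the corner $t_{\mathbf A}:=\varphi_{\mathbf A}(\mathbf 0)\in K$ of a level-$nr$ cube, and in the form $d_H(\Delta_\alpha,\Delta_\beta)<M^{-nr}\delta/4$ of the Hausdorff distance, which is immediate since $\Delta_\alpha=\Pi_\alpha(K)$ and $\Delta_\beta=\Pi_\beta(K)$.

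Now fix $\mathbf A\in\mathcal A_{nr}$, so that $\varphi_{\mathbf A}(\mathbf x)=M^{-nr}\mathbf x+t_{\mathbf A}$. As $\Pi_\alpha$ and $\Pi_\beta$ are linear, on the target plane $S_{\mathcal I'}$ the maps $\Pi_\alpha\circ\varphi_{\mathbf A}$ and $\Pi_\beta\circ\varphi_{\mathbf A}$ act as $u\mapsto M^{-nr}u+\Pi_\alpha(t_{\mathbf A})$ and $u\mapsto M^{-nr}u+\Pi_\beta(t_{\mathbf A})$, i.e.\ the \emph{same} dilation of ratio $M^{-nr}$ followed by translations whose difference $\Pi_\alpha(t_{\mathbf A})-\Pi_\beta(t_{\mathbf A})$ has norm $<M^{-nr}\delta/4$. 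Consequently, for every $S\subset S_{\mathcal I'}$,
\[
  \Pi_\alpha\varphi_{\mathbf A}(S)=\Pi_\beta\varphi_{\mathbf A}(S)+\bigl(\Pi_\alpha(t_{\mathbf A})-\Pi_\beta(t_{\mathbf A})\bigr)\subset\Pi_\beta\varphi_{\mathbf A}\bigl(B_{\delta/4}(S)\bigr),
\]
and taking $S=I_1^\alpha$, together with $B_{\delta/4}(I_1^\alpha)\subset B_{\delta/2}(I_1^\alpha)=I_1'$, yields $\Pi_\alpha\varphi_{\mathbf A}(I_1^\alpha)\subset\Pi_\beta\varphi_{\mathbf A}(I_1')$. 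Since also $I_1^\beta\subset B_{\delta/4}(I_1^\alpha)\subset I_1'$ --- $I_1^\beta$ lying within Hausdorff distance $<\delta/4$ of $I_1^\alpha$ by the bound on $d_H(\Delta_\alpha,\Delta_\beta)$ --- the same identity gives $\Pi_\beta\varphi_{\mathbf A}(I_1^\beta)\subset\Pi_\beta\varphi_{\mathbf A}(I_1')$, the asserted inclusion.

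For the ``$n=1$'' statement I take $\beta\in J=B_{C_RM^{-r}\delta/2}(\alpha)\cap A_{\mathcal I'}$: belonging to $A_{\mathcal I'}$ is what makes $\Pi_\beta$ well defined and Proposition~\ref{td23} applicable with a single constant $C_R$ --- through the uniform lower bound on $|\det(\matr{C}_2(\mathcal I'))|$ built into \eqref{td49} --- so the previous paragraph applies with $n=1$ and gives $\Pi_\alpha\varphi_{\mathbf A}(I_1^\alpha)\subset\Pi_\beta\varphi_{\mathbf A}(I_1')$ for all $\mathbf A\in\mathcal A_r$. Using the identity $\ind_I\circ\psi_{\beta,\mathbf A}=\ind_{\Pi_\beta\varphi_{\mathbf A}(I)}$ (and its analogue for $\alpha$) to unwind the definition of $F_\beta^{\,r}$, this inclusion gives, for every $x\in I_2^\alpha$,
\[
\begin{aligned}
  F_\beta^{\,r}\ind_{I_1'}(x)
  &=\sum_{\substack{\mathbf A\in\mathcal A_r\\ x\in\Pi_\beta\varphi_{\mathbf A}(I_1')}}p_{\mathbf A}
  \ \ge\ \sum_{\substack{\mathbf A\in\mathcal A_r\\ x\in\Pi_\alpha\varphi_{\mathbf A}(I_1^\alpha)}}p_{\mathbf A}\\
  &=F_\alpha^{\,r}\ind_{I_1^\alpha}(x)\ \ge\ 2\,\ind_{I_2^\alpha}(x),
\end{aligned}
\]
the last inequality being Condition $A(\alpha)$; for $x\notin I_2^\alpha$ the inequality is trivial. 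Together with the nesting $I_1'\subset int\,I_2^\alpha$ and $I_2^\alpha\subset int\,\Delta_\beta$, this is exactly Condition $A(\beta)$ realised by the sets $I_1'$, $I_2^\alpha$ and the integer $r$.

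The one point that is not automatic --- and where I expect to have to be careful --- is this last nesting: $I_1'$ is a \emph{fixed} $\delta/2$-collar of $I_1^\alpha$, whereas $\partial\Delta_\beta$, hence the amount of room left around $I_2^\alpha$, can move by up to $M^{-r}\delta/4$ as $\beta$ ranges over $J$. For the sets $I_1^\alpha=\Delta_\alpha\setminus B_\eta(W_0)$, $I_2^\alpha=\Delta_\alpha\setminus B_{\eta/M}(W_0)$ produced by Lemma~\ref{td1} one has $\delta\le\eta(M-1)/M$, so $M^{-r}\delta/4<\eta/M$ and the nesting holds automatically; for abstract Condition $A(\alpha)$ data one simply shrinks the radius of $J$, which is permitted by the ``$\varepsilon>0$ arbitrary, $C_R$ small enough'' form of Proposition~\ref{td23}. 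Everything else reduces to the linear-algebra estimates already contained in that proposition, together with the exact dilation behaviour of the maps $\varphi_{\mathbf A}$.
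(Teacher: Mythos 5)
Your proof is correct and follows the only route available: the paper offers no explicit proof of this corollary, treating it as an immediate instance of Proposition \ref{td23}, and your argument simply fills in the routine bookkeeping (the translation identity for $\Pi_\alpha\circ\varphi_{\mathbf A}$ on the target plane and the resulting comparison of $F_\beta^{\,r}$ with $F_\alpha^{\,r}$). You also rightly instantiate the proposition with $\varepsilon=M^{-nr}\delta/4$ rather than the paper's stated $M^{-nr}\delta/2$, which is the choice consistent with the hypothesis $d(\alpha,\beta)<C_RM^{-nr}\delta/4$ and the claimed conclusion.
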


\subsubsection{The main proof}\label{td17}

Hence we can restrict ourselves to a range $J$ like above. The proof follows the line of the proof in \cite[Section 5]{RS}. Note that it is enough to prove that the nonempty interior in Theorem \ref{td19} exists with positive probability. This is because almost surely conditioned on $E\neq \emptyset$, for any $N$ there exists $n$ such that there are at least $N$ retained level $n$ cubes which will not vanish totally. In addition, events happening in different cubes are independent and statistically similar. Hence if the interior of all orthogonal projections is nonempty with positive probability, then the same holds almost surely.

\begin{proof}[Proof of Theorem \ref{td19}]

Let us write $I_1$ for $I_1'$, $I_2$ for $I_2^\alpha$ and $\delta'$ for the new Hausdorff distance $\delta/2$. Assume $\mathbf x,\mathbf y\in \Delta_\alpha$, $\left\| \mathbf x-\mathbf y\right\|_2<\delta' M^{-nr}/4$, and that for $\alpha, \beta\in J$
\[
  d(\alpha,\beta) < C_RM^{-nr}\delta'/4.
\]
Then from Corollary \ref{td55} by triangle inequality it follows that $\left\|\Pi_\alpha(\mathbf x)-\Pi_\beta(\mathbf y)\right\|_2<\delta' M^{-nr}/2$, and hence for any $\mathbf A\in \mathcal{A}_{nr}$
\be\label {td10}
  G_\beta^r\ind_{I_2}(\psi_{\mathbf A}(x))\geq G_\alpha^r\ind_{I_1}(\psi_{\mathbf A}(y)).
\ee
For given $n$ let $X_n$ be a $\delta' M^{-nr}/4$ dense subset of $I_1$ and $Y_n$ be a $C_RM^{-nr}\delta'/4$ dense subset of $J$ such that
\[
  \#(X_n\times Y_n)\leq cM^{2kdnr},
\]
with some constant $c$. For any $(x,\theta)\in I_1'\times J$ we define a sequence of random variables
\[
  V_n(x,\theta)=\#\Big\{ \mathbf A\in \mathcal{E}_{nr}\ \big|\ x\in \Pi_\theta\circ \varphi_{\mathbf A}(I_2)\Big\},
\]
where $\mathcal{E}_n=\mathcal{E}_n(\omega)$ stands for the set of retained level $n$ cubes. We prove that with positive probability $V_n(\mathbf x,\theta)\geq(3/2)^n$ for all $n,\mathbf x,\theta$. Let us use induction on $n$ and note that the $n=0$ case is obvious. For $(\mathbf y,\kappa)\in X_{n+1}\times Y_{n+1}$ let
\[
  Z(y,\kappa)=\Big\{(\mathbf x,\theta)\in I_1'\times J\ \Big|\ \left\| \mathbf x-\mathbf y\right\|_2<\delta' M^{-(n+1)r}/4\ \&\ d(\theta-\kappa)< C_RM^{-(n+1)r}/4\Big\}.
\]
Clearly the sets $Z(y,\kappa)$ cover $I_1'\times J$.

By the inductive hypothesis with positive probability $V_n(\mathbf x,\theta)\geq (3/2)^n$. For each level $nr$ cube $K_{\mathbf A}$ in $V_n(\mathbf x,\theta)$ the number of its sub-cubes in $V_{n+1}(\mathbf x,\theta)$ is given by $G_\theta^r\ind_{I_2}(\psi_{\mathbf A}(\mathbf x))$, which by \eqref{td10} can be bounded from below uniformly in $(\mathbf x,\theta)\in Z(\mathbf y,\kappa)$ by
\[
G_\theta^r\ind_{I_2}(\psi_{\mathbf A}(\mathbf x))\geq G_\kappa^r\ind_{I_1}(\psi_{\mathbf A}(\mathbf y)).
\]
The expected value of this random variable is $2$, and it is bounded below by $0$, above by $M^{dr}$. Moreover, random variables coming from different level $nr$ cubes are independent. Hence, by Azuma-Hoeffding inequality
\[
  \Pv\left( \sum_{\substack{\mathbf A\in \mathcal{E}_{nr}:\\ x\in \Pi_\theta\circ \varphi_{\mathbf A}(I_2)}} {G_\kappa^r\ind_{I_1}(\psi_{\mathbf A}(\mathbf y))} \geq \left(\frac{3}{2}\right)^{n+1} \Bigg| \ V_n(\mathbf y,\kappa)\geq \left(\frac{3}{2}\right)^n \right)\geq 1-\rho^{\left( 3/2\right)^n},
\]
where $0<\rho<1$ is fixed. Hence using the notation
\[
  E_n=\Bigg\{(\forall \mathbf x\in X_{n})(\forall \theta\in Y_{n})\ V_{n}(\mathbf x,\theta)\geq \left(\frac{3}{2}\right)^{n}\Bigg\}
\]
we have
\[
  \Pv\Big( E_{n+1}\ \big|\ E_n\Big)\geq \left(1-\rho^{\left( 3/2\right)^n}\right)^{cM^{2kdnr}}.
\]
Summation in $n$ converges, hence for any $\{p_\mathbf{A}\}_{\mathbf{A} \in \mathcal{A}_1}$ satisfying Condition $B$, a.s. $\Pi_\alpha(E(\omega))$ has nonempty interior for all $\alpha$ such that $S_\alpha$ is not a coordinate plane, conditioned on $E\neq \emptyset$.

\end{proof}

\section{Radial and co-radial projections}\label{td41}

In this section we consider radial and co-radial projections $Proj_t(E)$ and $CProj_t(E)$ with center $t\in \mathbb{R}^d$ of $E$. Recall that $Proj_t(E)$ is the set of vectors under which points of $E\setminus \{ t\}$ are visible from $t$ and $CProj_t(E)$ is the set of distances between $t$ and points from $E$, see Definition \ref{td2345}. Our goal is to prove Theorem \ref{td20}. We do this as it is done in \cite{RS} for the $2$-dimensional case: We introduce a notion called \emph{Almost linear family of projections} for which, using the robustness property, it is easy to show that almost surely for all member of the family the interior of the projection of the percolation fractal is nonempty. Then we show that radial and co-radial projections can be viewed as \emph{Almost linear families of projections}.

\subsection{Almost linear family of projections}

We fix the dimensions $d$ and $k$ and recall that $A_{\{1,\dots,k\}}$ was defined in \eqref{td49}. Consider a parametrized family of projections
\be \label{td22}
  S_t(\mathbf x):K\to \bigcup_{\alpha\in A_{\{1,\dots,k\}}}{\Delta_\alpha}\subset span\{\mathbf e^{(1)},\dots,\mathbf e^{(k)}\},
\ee
$t\in T$. For all $\mathbf x\in K$ let $\alpha_t(\mathbf x)$ be such that
\be \label{td16}
  S_t(\mathbf x)=\Pi_{\alpha_t(\mathbf x)}(\mathbf x).
\ee
Note that $span\{\alpha_t(\mathbf x)\}=S_{\alpha_t(\mathbf x)}$ is not always well defined, since the only restriction on it is to have $(S_t(\mathbf x)-\mathbf x)\in P_{\alpha_t(\mathbf x)}$ and $\alpha_t(\mathbf x)\in A_{\{1,\dots,k\}}$. We suppose that $\alpha_t(\mathbf x)$ is such that $S_{\alpha_t(\mathbf x)}$ is not a coordinate plane.
\begin{defi}[Almost linear family of projections]\label{td18}
We say that a family $\{ S_t\}_{t\in T}$ ($S_t$ satisfies \eqref{td22}) is an \emph{almost linear family of projections} if we can choose $\alpha_t(\mathbf x)$ (according to \eqref{td16}) in such a way that the following properties are satisfied. We set $J$ as the range of vectors for which Condition $A(\alpha)$ is satisfied with the same $I_1,I_2$ and $r$. We denote by $\delta$ the Hausdorff distance between $I_1$ and $I_2$.
\begin{enumerate}
	\item[i)] $\alpha_t(\mathbf x)\in J$ for all $t\in T$ and $\mathbf x\in K$.
	\item[ii)] $\alpha_t(\mathbf x)$ is a Lipschitz function of $\mathbf x$, with the Lipschitz constant not greater than $C_R(J)\delta/4$. This guarantees in particular that $S_t(K_{\mathbf A})$ is connected for any $n$ and $\mathbf A\in \mathcal{A}_n$.
	\item[iii)] For any $n$ we can divide $T$ into subsets $Z_i^{(n)}$ such that whenever $t,s\in Z_i^{(n)}$ and $\mathbf x,\mathbf y\in K_{\mathbf A}$, $\mathbf A\in \mathcal{A}_n$, we have
	\[
	  \left\| \alpha_t(\mathbf x)-\alpha_s(\mathbf y)\right\| \leq C_R(J)M^{-nr}\delta/4.
	\]
	Moreover, we can do that in such a way that $\#\{ Z_i^{(n)}\}$ grows only exponentially fast with $n$.
\end{enumerate}
\end{defi}
In the following we show that:

\begin{thm}\label{td70} Suppose that Condition $A(\alpha)$ holds for all $\alpha \in J$. Then for an almost linear family of projections $\{ S_t\}_{t\in T}$ almost surely $S_t(E)$ has nonempty interior for all $t$ conditioned on $E\neq \emptyset$.
\end{thm}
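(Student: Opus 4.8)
The plan is to mimic the proof of Theorem~\ref{td19} given in Section~\ref{td17}, using the three defining properties of an almost linear family as the substitute for Corollary~\ref{td55}. As in that proof, it suffices to show that the interior of $S_t(E)$ is nonempty for all $t\in T$ \emph{simultaneously with positive probability}: since conditioned on $E\neq\emptyset$ there are almost surely arbitrarily many surviving level-$n$ cubes whose subtrees are independent and statistically identical, a positive-probability statement upgrades to an almost sure one. So fix the range $J$ of directions on which Condition $A(\alpha)$ holds with common $I_1,I_2,r$ (here $I_1$ is the enlarged set $I_1'$ as in Corollary~\ref{td55}), let $\delta$ be the Hausdorff distance between $I_1$ and $I_2$, and for $(\mathbf x,t)\in I_1\times T$ set
\[
  V_n(\mathbf x,t)=\#\Big\{\mathbf A\in\mathcal E_{nr}\ \big|\ \mathbf x\in S_t\circ\varphi_{\mathbf A}(I_2)\Big\}.
\]
The goal is: with positive probability, $V_n(\mathbf x,t)\geq(3/2)^n$ for all $n$, all $\mathbf x\in I_1$ and all $t\in T$. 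Once this holds, for every $t$ and every $n$ the set $S_t(E)$ meets $S_t\circ\varphi_{\mathbf A}(I_2)$ for at least $(3/2)^n$ surviving cubes $\mathbf A\in\mathcal E_{nr}$; a standard compactness/nesting argument (identical to the one in \cite{RS}) then shows $S_t(E)\supseteq\bigcap_n(\text{some }S_t\circ\varphi_{\mathbf A}(I_2))$ contains an interior point, because the diameters shrink geometrically by property ii) (which ensures $S_t(K_{\mathbf A})$ is connected and of size $\asymp M^{-nr}$) while the multiplicity stays positive.

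The induction step is where properties i)--iii) enter. Cover $I_1\times T$ by the sets
\[
  Z(\mathbf y,\kappa)=\Big\{(\mathbf x,t)\ \Big|\ \|\mathbf x-\mathbf y\|_2<\delta' M^{-(n+1)r}/4,\ t\in Z_i^{(n+1)}\Big\},
\]
where $\kappa$ ranges over a choice of one index $i$ per class $Z_i^{(n+1)}$, $\mathbf y$ over a $\delta'M^{-(n+1)r}/4$-dense subset of $I_1$, and $\delta'=\delta/2$. By property iii) the number of classes grows only exponentially in $n$, and the number of $\mathbf y$'s is $O(M^{kdnr})$, so the total number of sets $Z(\mathbf y,\kappa)$ is at most $cM^{Cnr}$ for constants $c,C$. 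For a fixed level-$nr$ cube $\mathbf A$ counted in $V_n$, the number of its level-$(n+1)r$ descendants contributing to $V_{n+1}(\mathbf x,t)$ is $G_t^r\ind_{I_2}(\psi_{\alpha_t(\cdot),\mathbf A}(\mathbf x))$; the crucial point is that by properties ii) and iii), for $(\mathbf x,t)\in Z(\mathbf y,\kappa)$ the direction $\alpha_t(\mathbf x)$ lies within $C_R(J)M^{-nr}\delta/4$ of the reference $\alpha_\kappa(\mathbf y)$, so the Robustness Corollary~\ref{td55} gives the uniform lower bound
\[
  G_t^r\ind_{I_2}\!\big(\psi_{\alpha_t(\mathbf x),\mathbf A}(\mathbf x)\big)\ \geq\ G_{\alpha_\kappa(\mathbf y)}^r\ind_{I_1}\!\big(\psi_{\alpha_\kappa(\mathbf y),\mathbf A}(\mathbf y)\big).
\]
The right-hand side depends only on $(\mathbf y,\kappa)$ and on the percolation inside $K_{\mathbf A}$, has conditional expectation $\geq2$ by Condition~$A(\alpha)$, and is bounded in $[0,M^{dr}]$; the contributions from distinct level-$nr$ cubes are independent. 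Azuma--Hoeffding then bounds the probability that the sum over the $\geq(3/2)^n$ relevant cubes falls below $(3/2)^{n+1}$ by $\rho^{(3/2)^n}$ for some fixed $\rho\in(0,1)$, uniformly over the single representative pair $(\mathbf y,\kappa)$, hence over all of $Z(\mathbf y,\kappa)$.

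Writing $E_n=\{\forall\ \mathbf y\in X_n,\ \forall\ \kappa:\ V_n(\mathbf y,\kappa)\geq(3/2)^n\}$ and taking a union bound over the $\leq cM^{Cnr}$ sets in the cover gives
\[
  \Pv\big(E_{n+1}\mid E_n\big)\ \geq\ \Big(1-\rho^{(3/2)^n}\Big)^{cM^{Cnr}},
\]
and since $\sum_n M^{Cnr}\rho^{(3/2)^n}<\infty$, the infinite product $\prod_n\Pv(E_{n+1}\mid E_n)$ is strictly positive. On this event every $V_n(\mathbf x,t)$ with $(\mathbf x,t)$ near some grid point is large, and passing from grid points to all $(\mathbf x,t)$ is exactly what the covering was designed for, so $S_t(E)$ has nonempty interior for all $t\in T$ with positive probability, completing the argument. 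The main obstacle — and the only place the ``almost linear'' hypotheses do real work — is verifying that properties ii) and iii) together with Corollary~\ref{td55} yield the direction-uniform lower bound above, i.e.\ that varying both the base point $\mathbf x$ inside a cube and the parameter $t$ inside a class perturbs $\Pi_{\alpha_t(\mathbf x)}\circ\varphi_{\mathbf A}$ by less than the slack $\delta'M^{-nr}/4$ built into $I_1\supset I_1'$; the quantitative bookkeeping of the two independent sources of perturbation (spatial, via the Lipschitz constant on $\alpha_t(\cdot)$, and parametric, via the diameter of $Z_i^{(n)}$) is the technical heart, but both are controlled by the same constant $C_R(J)\delta/4$ by design of Definition~\ref{td18}.
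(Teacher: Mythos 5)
Your proposal is correct and follows essentially the same route as the paper: the same counting variables $V_n(\mathbf x,t)$, the same covering of $I_1\times T$ by balls in $X_{n+1}$ times the classes $Z_i^{(n+1)}$, the same freezing of the direction to reduce to a linear projection where Condition $A$ and Corollary \ref{td55} apply, and the same Azuma--Hoeffding plus union-bound induction. The only cosmetic difference is your choice of reference direction $\alpha_\kappa(\mathbf y)$ per covering set versus the paper's $\alpha_t(X_{\mathbf A})$ at the centre of each cube; both are controlled by properties ii) and iii) of Definition \ref{td18}, and your write-up is in fact more explicit than the paper's.
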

The proof follows the proof of \cite[Theorem 14]{RS} and is a modified version of the proof in Section \ref{td17}.

\begin{proof}

Let
\[
  V_n(\mathbf x,t)=\#\Big\{ \mathbf A\in \mathcal{E}_{nr}\ \big|\ \mathbf x\in S_t\circ \varphi_{\mathbf A}(I_2)\Big\}.
\]
We prove inductively that with positive probability $V_n(\mathbf x,\theta)\geq(3/2)^n$ for all $n,\mathbf x,\theta$. The $n=0$ case is obvious, a.s. $V_0(\mathbf x,\theta)=1$ for all $\mathbf x,\theta$. For given $n$ let $X_n$ be a $\delta M^{-nr}/4$ dense subset of $I_1$. Then we can cover $I_1\times T$ with at most exponentially many sets of the form $B_{\delta M^{-(n+1)r}/2}(\mathbf x_i)\times Z_j^{(n+1)r}$, $\mathbf x_i\in X_{n+1}$.

By the inductive hypothesis with positive probability $V_n(\mathbf x,t)\geq (3/2)^n$. For each level $nr$ cube $K_{\mathbf A}$ in $V_n(\mathbf x,t)$ the number of its sub-cubes in $V_{n+1}(\mathbf x,t)$ can be bounded from below by
\[
  G_{\alpha_t( X_{\mathbf A})}^r\ind_{I_1}(\psi_{\mathbf A}(\mathbf x_i)),
\]
where $X_{\mathbf A}$ is the center of $K_{\mathbf A}$ and $t\in Z_j^{(n+1)r}$ is arbitrary. Note that now $\alpha_t(X_{\mathbf A})$ is fixed, i.e. we approximate with a linear projection, so we can apply Condition $A(\alpha_t(X_{\mathbf A}))$.

The expected value of this random variable is $2$, and it is bounded below by $0$, above by $M^{dr}$. Moreover, random variables coming from different level $nr$ cubes are independent. So we can apply Azuma-Hoeffding inequality as above in the proof of Theorem \ref{td19}.

\end{proof}

\subsection{Mandelbrot umbrella}
\begin{proof}[Proof of Theorem \ref{td20}]
It is easy to see that instead of the radial projection $\mathrm{Proj}_t$, it is equivalent to consider the projection $R_t$ defined by
\[
  R_t(\mathbf x)= Line(t, \mathbf x)\cap span\{\mathbf e^{(1)},\dots,\mathbf e^{(k)}\},
\]
where $Line(t, \mathbf x)$ is the line through $t$ and $\mathbf x$. Similarly, instead of $\mathrm{CProj}_t$, it is equivalent to consider the projection $\widetilde{R}_t$ defined by
\[
  \widetilde{R}_t(\mathbf x)= Sphere(\mathbf x - t )\cap span\{\mathbf e^{(1)}\}_+,
\]
where $Sphere(\mathbf x - t)$ is the sphere around the origin and through $\mathbf x - t$, and we intersect it with the positive half axis.

As explained in \cite[Section 3]{RS}, by statistical self-similarity, we only need to consider radial and co-radial projections with center separated from parallel directions and arbitrary big distance from $K$. This ensures that conditions $ii)$ and $iii)$ of Definition \ref{td18} hold. Condition $i)$ also holds if we subdivide the family of centers to at most countably many subfamilies. Hence we can apply Theorem \ref{td70} and thus Theorem \ref{td20} holds.
\end{proof}

\bibliographystyle{plain}

\end{document}